 \let\mathscr\relax
\theoremstyle{definition}
\newtheorem{defin}{Definition}[section]
\theoremstyle{definition}
\theoremstyle{plain}
\newtheorem{theo}[defin]{Theorem}
\theoremstyle{plain}
\newtheorem{prop}[defin]{Proposition}
\theoremstyle{plain}
\newtheorem{lem}[defin]{Lemma}
\theoremstyle{plain}
\newtheorem{cor}[defin]{Corollary}
\theoremstyle{definition}
\theoremstyle{definition}
\theoremstyle{definition}
\theoremstyle{plain}
\theoremstyle{definition}
\theoremstyle{definition}
\theoremstyle{definition}
\newtheorem*{defin*}{Definition}
\theoremstyle{definition}
\newtheorem*{ex*}{Example}
\theoremstyle{plain}
\newtheorem*{theo*}{Theorem}
\theoremstyle{plain}
\newtheorem*{prop*}{Proposition}
\theoremstyle{plain}
\newtheorem*{lem*}{Lemma}
\theoremstyle{plain}
\newtheorem*{cor*}{Corollary}
\theoremstyle{definition}
\newtheorem*{rmk*}{Remark}
\theoremstyle{definition}
\newtheorem*{exe*}{Exercise}
\theoremstyle{plain}
\theoremstyle{plain}
\newtheorem{theoA}{Theorem}[section]
\theoremstyle{plain}
\newtheorem{corA}[theoA]{Corollary}
\theoremstyle{plain}
\newtheorem{conjA}[theoA]{Conjecture}
\numberwithin{equation}{section}
\definecolor{myblue}{RGB}{0,82,155}
\titleformat{\chapter}[display]
  {\normalfont\bfseries\color{black}}
  {\centering
    {}    
    {\fontsize{50}{50}\bfseries\selectfont\thechapter}
  }
  {5pt}
  {\titlerule[1.5pt]\vskip1pt\titlerule\vskip5pt\centering\Huge\selectfont}
\setlist[enumerate]{label=(\roman*)}
\def\irr{{\rm Irr}}
\def\ker{{\rm Ker}}
\def\syl{{\rm Syl}}
\def\aut{{\rm Aut}}
\def\ind{{\rm Ind}}
\def\n{{\mathbf{N}}}
\def\c{{\mathbf{C}}}
\def\z{{\mathbf{Z}}}
\def\O{{\mathbf{O}}}
\def\H{{\mathcal{H}}}
\def\P{{\mathcal{P}}}
\def\F{{\mathbf{F}}}
\def\E{{\mathbf{E}}}
\newcommand{\uset}[3][0ex]{%
  \mathrel{\mathop{#3}\limits_{
    \vbox to#1{\kern-7\ex@
    \hbox{$\scriptstyle#2$}\vss}}}}
\newcommand{\wt}[1]{\widetilde{#1}} 
\newcommand{\wh}[1]{\widehat{#1}}
\def\thm@space@setup{%
  \thm@preskip=\parskip \thm@postskip=0pt
}
\def\blfootnote{\gdef\@thefnmark{}\@footnotetext}
\title{\huge
{\bf The McKay Conjecture and central isomorphic character triples}
\author{Damiano Rossi}
\date{}
\blfootnote{\emph{$2010$ Mathematical Subject Classification:} $20$C$15$.
\\
\emph{Key words and phrases:} McKay Conjecture, inductive McKay condition, character triples.
\\
These results were obtained during the author's PhD at the Bergische Universit\"at Wuppertal funded by the research training group \textit{GRK2240: Algebro-geometric Methods in Algebra, Arithmetic and Topology} of the DFG. This work is also supported by the EPSRC grant EP/T004592/1. The author would like to thank Britta Sp\"ath for a careful reading of an earlier version of this paper.
}
}
\begin{document}

\selectlanguage{english}

\maketitle

\renewcommand{\theconjA}{\Alph{conjA}}

\renewcommand{\thetheoA}{\Alph{theoA}}

\renewcommand{\thecorA}{\Alph{corA}}

\begin{abstract}
We refine the reduction theorem of the McKay Conjecture proved by of Isaacs, Malle and Navarro. Assuming the inductive McKay condition, we obtain a strong version of the McKay Conjecture that gives central isomorphic character triples.
\end{abstract}

\section{Introduction}

The McKay Conjecture is one of the leading problems in representation theory of finite groups. It states that, if $p$ is a prime number and $P$ is a Sylow $p$-subgroup of a finite group $G$, then
$$\left|\irr_{p'}(G)\right|=\left|\irr_{p'}(\n_G(P))\right|,$$
where for any finite group $X$ we denote by $\irr_{p'}(X)$ the set of irreducible complex characters of $G$ whose degree is not divisible by $p$. In \cite{Isa-Mal-Nav07} Isaacs, Malle and Navarro prove a reduction theorem for the McKay Conjecture and show that the conjecture holds for every finite group with respect to the prime $p$ provided that the so-called inductive McKay condition holds for every non-abelian finite simple group with respect to the prime $p$.

The inductive McKay condition requires the existence of a bijection as the one predicted by the McKay Conjecture which gives central isomorphic character triples and is compatible with the action of automorphisms. Although this condition was originally thought for quasi-simple groups, it can be stated for arbitrary finite groups.

\begin{conjA}
\label{conj:iMcK}
Let $G\unlhd A$ be finite groups, $p$ a prime and $P$ a Sylow $p$-subgroup of $G$. Then there exists an $\n_A(P)$-stable subgroup $\n_G(P)\leq M\leq G$, with $M<G$ whenever $P$ is not normal in $G$, and an $\n_A(P)$-equivariant bijection
$$\Omega:\irr_{p'}\left(G\right)\to\irr_{p'}\left(M\right)$$
such that
$$\left(A_\chi,G,\chi\right)\geq_c\left(M\n_A(P)_\chi,M,\Omega(\chi)\right),$$
for every $\chi\in\irr_{p'}(G)$.
\end{conjA}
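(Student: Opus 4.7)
The plan is to prove Conjecture \ref{conj:iMcK} by induction on $|G|$, using the inductive McKay condition for the non-abelian simple groups involved in $G$ and refining the Isaacs--Malle--Navarro reduction so that the output character-triple relation is the stronger $\geq_c$ rather than its weaker projective analogue. Two base cases are immediate: if $P\unlhd G$ one takes $M=G$ and $\Omega$ the identity, and the $\geq_c$ relation is tautological; if $G$ is quasi-simple, the statement is essentially the inductive McKay condition itself.

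For the inductive step, assume $G$ is non-simple and $P\not\unlhd G$, and pick a minimal $A$-invariant normal subgroup $N$ of $G$. Split into three cases according to the structure of $N$: (i) $N$ is an elementary abelian $p$-group, (ii) $N$ is an elementary abelian $p'$-group, (iii) $N$ is a direct product of isomorphic non-abelian simple groups. In case (i) one has $N\leq P$ and reduces modulo $N$ to the induction hypothesis applied to $(A/N,G/N)$. In case (ii) the Clifford correspondence reduces the problem, for each $\n_A(P)$-orbit of $\theta\in\irr(N)$, to the stabiliser $G_\theta$, which is strictly smaller than $G$ and to which the induction hypothesis applies. Case (iii) is the genuinely new input: one applies the inductive McKay condition to each simple component of $N$, uses the $A$-action permuting the components to assemble an $\n_A(P)$-equivariant $\geq_c$-compatible bijection for $N$, and then combines this with the induction hypothesis applied to triples lying over a fixed character of $N$.

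In every case the subgroup $M$ is constructed by taking $\n_G(P)$ together with the preimage in $G$ of the subgroup produced by the induction hypothesis, and $\Omega$ is obtained by composing the Clifford correspondence with the bijections coming from the induction hypothesis and the inductive McKay condition. The $\n_A(P)$-stability of $M$, the $\n_A(P)$-equivariance of $\Omega$, and the fact that $M<G$ follow by bookkeeping once the character-triple level has been arranged correctly. The combination of the local data with the induction hypothesis is where the transitivity and compatibility with Clifford correspondence of the relation $\geq_c$, as developed by Sp\"ath, must be invoked.

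The main obstacle will lie in case (iii), and more specifically in upgrading the projective-representation-level data furnished by the original Isaacs--Malle--Navarro reduction to genuine $\geq_c$-data. One has to verify that the central isomorphic character triples produced by the inductive McKay condition at the level of the simple components of $N$ patch together through Clifford correspondence to a single triple for $G$ that is $\geq_c$-related to the corresponding local triple, and that this relation is compatible with the action of $\n_A(P)$. The technical heart of the argument is a Butterfly-type manipulation of the stabilisers $A_\chi$ versus $\n_A(P)_\chi$, chosen so that the first component of the output triple is exactly $M\n_A(P)_\chi$ rather than a merely projectively equivalent overgroup.
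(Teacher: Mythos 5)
Your strategy --- induction on $|G|$ over a minimal $A$-invariant normal subgroup $N$, with the three cases (normal $p$-subgroup, normal $p'$-subgroup, semisimple socle) --- is the shape of the original Isaacs--Malle--Navarro reduction, but it is not the route this paper takes, and as written it has a genuine gap in cases (i) and (ii). If $N$ is an elementary abelian $p$-group and $\chi\in\irr_{p'}(G)$, the irreducible constituents of $\chi_N$ need not be trivial (consider the degree-$3$ character of $A_4$ over $\O_2(A_4)$ for $p=2$), so $\irr_{p'}(G)\neq\irr_{p'}(G/N)$ and you cannot simply ``reduce modulo $N$''. Likewise, in case (ii) the relevant $\theta\in\irr(N)$ may be $G$-invariant, in which case $G_\theta=G$ and Clifford correspondence gives nothing smaller; the extreme instance is $N\leq\z(G)$, which your case division does not address at all. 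In both situations the standard remedy is to replace $(G,N,\theta)$ by an isomorphic character triple in which $N$ has become central, but the resulting group is a central extension that need not have smaller order, so an induction on $|G|$ does not close. This is precisely why the paper (following Navarro--Sp\"ath) orders counterexamples by $|G:\z(G)|$ rather than $|G|$, never passes to quotients, and instead proves directly that in a minimal counterexample $\O_p(G)\leq\z(G)$ and $\O_{p'}(G)\leq\z(G)$ (Corollary \ref{cor:Reduction 1} and Proposition \ref{prop:Reduction 2}), so that $\z(G)=\F(G)<\F^*(G)$ and the entire weight of the argument falls on the layer $K=\E(G)$.

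Two further points. First, in your case (iii) the components of a minimal normal semisimple $N$ are the simple groups themselves, while the inductive condition is formulated for their universal covering groups; the paper therefore works not with $N$ but with the perfect quasi-simple subgroups inside the layer, assembling the bijection for their central product via Lemma \ref{lem:Central products and triple isomorphisms} and Corollary \ref{cor:Constructing bijections from iMcK}, and this central-character bookkeeping is not optional. Second, the step you rightly identify as the technical heart --- lifting a $\geq_c$-compatible equivariant bijection on a normal subgroup $K$ to one on $G$ --- is Proposition \ref{prop:Constructing bijections over bijections}, and it needs $G=K\n_G(P_0)$; in the paper this equality is not a consequence of Clifford theory but is again extracted from the minimality of the counterexample (Corollary \ref{cor:Reduction 1}). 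Until you specify an induction parameter that survives the central-extension step, the proposal does not constitute a proof.
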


Observe that the above statement could equivalently be stated by taking $M=\n_G(P)$.
However this additional flexibility is fundamental when proving the result for quasi-simple groups.
It's also worth noting that, by using \cite[Theorem 2.16]{Spa18}, it's no loss of generality to assume $A=G\rtimes \aut(G)$.

The reduction theorem of Isaacs, Malle and Navarro can now be stated by saying that if Conjecture \ref{conj:iMcK} holds for every universal covering group of finite non-abelian simple groups, then the McKay Conjecture holds for every finite group.

The first attempt to prove a reduction theorem for Local-Global conjectures was made in \cite{Dad97} in the context of Dade's Projective Conjecture. According to Dade's philosophy, there should exist a refinement of the conjecture that is strong enough to hold for every finite group when shown for quasi-simple groups. In the case of Dade's Projective Conjecture such a refinement should be found in the inductive form of Dade's conjecture \cite[5.8]{Dad97} (see also \cite[Conjecture 1.2]{Spa17}). The aim of this paper is to show that Conjecture \ref{conj:iMcK} provides the sought refinement in the case of the McKay Conjecture.
We recall that a group $S$ is said to be \emph{involved} in $G$ if there exists $N\unlhd K\leq G$ such that $S\simeq K/N$.

\begin{theoA}
\label{thm:Main}
Let $G$ be a finite group and $p$ a prime. Suppose that Conjecture \ref{conj:iMcK} holds at the prime $p$ for the universal covering group of every non-abelian simple group involved in $G$. Then Conjecture \ref{conj:iMcK} holds for $G$ at the prime $p$.
\end{theoA}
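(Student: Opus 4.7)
The strategy is to refine the classical reduction of Isaacs--Malle--Navarro \cite{Isa-Mal-Nav07} by systematically replacing projective-representation arguments with compatibility properties of the stronger relation $\geq_c$. The plan is to proceed by induction on $|G|$ and, using \cite[Theorem 2.16]{Spa18}, to reduce at once to $A=G\rtimes\aut(G)$. Let $G$ be a minimal counterexample; clearly $P$ cannot be normal in $G$, as otherwise $M=G$ works trivially, so I would assume $P$ is not normal throughout.

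The first block of reductions is structural. If $\O_{p'}(G)>1$, I would pass to a bijection on $G/\O_{p'}(G)$ via the inductive hypothesis and lift it to $G$ using the well-known compatibility of $\geq_c$ with inflation through a normal $p'$-subgroup and with Clifford correspondence (developed systematically in \cite{Spa18}). An analogous argument handles $\O_p(G)>1$ by quotienting out and pulling back along the epimorphism, since $\irr_{p'}$ is preserved under such lifts. After these steps, one may assume that every minimal normal subgroup of $G$ has the form $N=S_1\times\cdots\times S_n$ where the $S_i$ are pairwise isomorphic non-abelian simple groups of order divisible by $p$, permuted transitively by $A$.

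The second block uses the assumed Conjecture \ref{conj:iMcK} applied to the universal covering group $\wt{S}$ of $S:=S_1$. This supplies an $\n_{\wt{S}\rtimes\aut(\wt{S})}(\wt{P})$-equivariant bijection together with central isomorphisms of the corresponding character triples at the level of $\wt{S}$. I would then descend this data to $S$ through the central quotient $\wt{S}\twoheadrightarrow S$, extend it to $N$ by a wreath-type construction that is compatible with the action of $\n_A(P)$ permuting the factors, and finally lift it to $G$ through the Clifford correspondence on the inertia subgroup of a character of $N$. The inductive hypothesis is invoked on the resulting proper subgroup, and transitivity of $\geq_c$ together with $\n_A(P)$-equivariance at each stage then closes the argument.

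The main technical obstacle is precisely the coherent passage of the relation $\geq_c$ through these three operations: descent along a central extension $\wt{S}\twoheadrightarrow S$, the wreath construction with its permutation action of $\n_A(P)$, and Clifford induction up to $G$. Each individual step amounts to a compatibility statement for central isomorphism of character triples; the hard part is assembling them in the right order while preserving $\n_A(P)$-equivariance and without losing the stronger information encoded by $\geq_c$ (as opposed to plain central isomorphism). Here the extra flexibility allowed by the formulation of Conjecture \ref{conj:iMcK}, namely that $M$ can be any $\n_A(P)$-stable subgroup with $\n_G(P)\leq M\leq G$ and $M<G$, is what makes the lift through Clifford theory viable and is what distinguishes the present refinement from the original reduction.
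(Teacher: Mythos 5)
Your sketch captures the general flavour of the reduction (minimal counterexample, the inductive condition applied to the simple constituents, a wreath-type extension to $S^n$, Clifford correspondence up to $G$, and transitivity of $\geq_c$), but it has a genuine gap at the very first structural step. You propose to eliminate $\O_{p'}(G)$ by "passing to a bijection on $G/\O_{p'}(G)$ and lifting it to $G$ by inflation". This cannot work as stated: $\irr_{p'}(G)$ contains characters that are nontrivial on $\O_{p'}(G)$, and these are invisible in the quotient; inflation only recovers the characters with $\O_{p'}(G)$ in their kernel. The same objection applies to your treatment of $\O_p(G)$. The actual proof never quotients. Instead it shows that in a minimal counterexample one has $\O_p(G)\leq\z(G)$ and $\O_{p'}(G)\leq\z(G)$, and the second containment is not formal: it rests on Proposition \ref{prop:Reduction 2}, which in turn uses the nontrivial bijection of \cite[Corollary 5.14]{Nav-Spa14I} (Theorem \ref{thm:DGN and triples} here) for the normal subgroup $M=KP$ with $P\cap K$ central, combined with Proposition \ref{prop:Constructing bijections over bijections} to propagate the bijection and the relation $\geq_c$ from $M$ up to $G$. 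You would need an ingredient of this strength; "compatibility of $\geq_c$ with inflation" does not supply it.

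Two further points of divergence that matter. First, the minimality is taken with respect to $|G:\z(G)|$, not $|G|$: the reduction repeatedly passes to subgroups of the form $K\n_G(P)$ (Proposition \ref{prop:Reduction 1}, Corollary \ref{cor:Reduction 1}) and finally to $M\n_G(P_0)$ with $M<\E(G)$, and it is the index over the centre, not the order, that is controlled at each stage; with your induction on $|G|$ the central subgroups you cannot quotient away would block the argument. Second, after the structural reductions the paper does not work with a minimal normal subgroup $S_1\times\cdots\times S_n$ with transitive $A$-action, but with the full layer $K=\E(G)$, a central product of quasi-simple groups over the (possibly nontrivial) centre, with possibly non-isomorphic simple quotients; handling this requires the central-product machinery of Lemma \ref{lem:Central products and triple isomorphisms} and Corollary \ref{cor:Constructing bijections from iMcK}, which your sketch does not address. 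The wreath-type step you describe corresponds to Lemma \ref{lem:iMck for direct products} and is indeed part of the argument, but only for a single factor $S_i^{n_i}$ of the layer.
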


Notice that, by work of Malle and Sp\"ath \cite{Mal-Spa16}, Conjecture \ref{conj:iMcK} is known to hold at the prime $p=2$ for every universal covering group of finite non-abelian simple groups. For odd primes $p$, Conjecture \ref{conj:iMcK} is know for almost all quasi-simple groups except possibly in certain cases when considering groups of Lie type ${\bf D}$ and $^2{\bf D}$ (see \cite{Mal08}, \cite{Spa12}, \cite{Cab-Spa13}, \cite{Cab-Spa17I}, \cite{Cab-Spa17II}, \cite{Cab-Spa19} and \cite{Spa21}).

\begin{corA}
\label{cor:Main p=2}
Conjecture \ref{conj:iMcK} holds for the prime $2$.
\end{corA}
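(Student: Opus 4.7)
The plan is to deduce Corollary \ref{cor:Main p=2} as an immediate consequence of Theorem \ref{thm:Main}. Given an arbitrary finite group $G$, I would apply Theorem \ref{thm:Main} at the prime $p=2$; this reduces the task of verifying Conjecture \ref{conj:iMcK} for $G$ to checking Conjecture \ref{conj:iMcK} at the prime $2$ for the universal covering group of every non-abelian simple group involved in $G$. Since every finite group involves only finitely many non-abelian simple groups, the reduction genuinely terminates on the quasi-simple level.

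For this input, I would invoke the main theorem of Malle and Sp\"ath \cite{Mal-Spa16}, which establishes the inductive McKay condition at the prime $2$ for every universal covering group of a non-abelian simple group. Feeding this into the hypothesis of Theorem \ref{thm:Main} yields Conjecture \ref{conj:iMcK} at $p=2$ for every finite group $G$, and the corollary follows.

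The only point requiring a moment of care is that the output of \cite{Mal-Spa16} must match the precise formulation of Conjecture \ref{conj:iMcK}, namely with central isomorphism $\geq_c$ of character triples (rather than a weaker block or ordinary isomorphism). Since the very purpose of \cite{Mal-Spa16} is to verify the inductive McKay condition of \cite{Isa-Mal-Nav07} for $p=2$, and that condition is designed to be precisely what is needed to feed into a reduction theorem of the present type, this is simply a matter of matching conventions. The substantive content of the corollary therefore sits entirely in Theorem \ref{thm:Main} and in the quasi-simple verification of Malle-Sp\"ath; the deduction itself is a one-line application.
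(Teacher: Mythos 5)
Your proposal is correct and matches the paper's intended argument exactly: the corollary is an immediate consequence of Theorem \ref{thm:Main} combined with the Malle--Sp\"ath verification \cite{Mal-Spa16} of Conjecture \ref{conj:iMcK} at $p=2$ for all universal covering groups of non-abelian simple groups, as stated in the paragraph preceding the corollary. Your remark about matching conventions is a reasonable point of care, and the paper treats it the same way, by citing \cite{Mal-Spa16} as establishing precisely the condition required.
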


We mention that strong forms of the Local-Global conjectures that are compatible with isomorphisms of character triples are of great use in representation theory of finite groups. For instance, in \cite{Nav-Spa14I}  a reduction theorem for Brauer's Height Zero Conjecture has been deduced from an analogue of Theorem \ref{thm:Main} in the context of the Alperin--McKay conjecture.

The paper is structured as follows: in Section $2$ we introduce some preliminary results on character triples while in Section $3$, assuming the inductive McKay condition, we obtain good bijections for groups whose quotient over the centre is isomorphic to a direct product of non-abelian simple groups. In the final section we prove Theorem \ref{thm:Main} by inspecting the structure of a minimal counterexample. 

\section{Preliminaries on character triples}

Let $G$ be a finite group, $N\unlhd G$ and $\vartheta\in\irr(N)$. If $\vartheta$ is $G$-invariant, then $(G,N,\vartheta)$ is a character triple. We are going to use the partial order relation $\geq_c$ on character triples as defined in \cite[Definition 10.14]{Nav18} and \cite[Definition 2.7]{Spa18}. Recall that this definition requires the existence of a pair of projective representation $(\mathcal{P},\mathcal{P}')$ associated with two character triples $(G,N,\vartheta)$ and $(H,M,\varphi)$ satisfying certain properties. Whenever we want to specify the choice of the projective representations we say that $(G,N,\vartheta)\geq_c(H,M,\varphi)$ via $(\mathcal{P},\mathcal{P}')$ or that $(\mathcal{P},\mathcal{P}')$ gives $(G,N,\vartheta)\geq_c(H,M,\varphi)$. In this case we say that $(G,N,\vartheta)$ and $(H, M,\varphi)$ are central isomorphic character triples. First, we need the following version of \cite[Theorem 3.14]{Nav-Spa14I} which shows the compatibility of the order relation $\geq_c$ with the Clifford correspondence.

\begin{lem}
\label{lem:Irreducible induction}
Let $N\unlhd G$, $\wt{G}\leq G$ and $H\leq G$ such that $G=NH$, $H=\wt{H}M$ and $\c_G(N)\leq H$. Set $M:=N\cap H$, $\wt{H}:=\wt{G}\cap H$, $\wt{M}:=\wt{G}\cap M$ and $\wt{N}:=\wt{G}\cap N$. Let $\wt{\vartheta}\in\irr(\wt{N})$ and $\wt{\varphi}\in\irr(\wt{M})$ such that $\vartheta:=\wt{\vartheta}^{N}\in\irr(N)$, $\varphi:=\wt{\varphi}^{M}\in\irr(M)$ and $(\wt{G},\wt{N},\wt{\vartheta})\geq_c(\wt{H},\wt{M},\wt{\varphi})$. Assume that induction of characters gives bijections $\ind_{\wt{J}}^{J}:\irr(\wt{J}\mid \wt{\vartheta})\to \irr(J\mid \vartheta)$ and $\ind_{\wt{J}\cap H}^{J\cap H}:\irr(\wt{J}\cap H\mid \wt{\varphi})\to \irr(J\cap H\mid \varphi)$, for every $N\leq J\leq G$ where $\wt{J}:=J\cap \wt{G}$, then $(G,N,\vartheta)\geq_c(H,M,\varphi)$.
\end{lem}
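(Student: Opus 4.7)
The plan is to produce projective representations $(\mathcal{P},\mathcal{P}')$ realizing $(G,N,\vartheta)\geq_c(H,M,\varphi)$ by lifting the given pair $(\wt{\mathcal{P}},\wt{\mathcal{P}}')$ realizing $(\wt G,\wt N,\wt\vartheta)\geq_c(\wt H,\wt M,\wt\varphi)$ through the Clifford-type induction structure provided by the irreducible-induction hypothesis.

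First, I would collect the structural consequences of the hypotheses. From $G=NH$, $H=\wt HM$ and $M\subseteq N$ one has $G=NH=N\wt HM=N\wt H\subseteq N\wt G\subseteq G$, so $G=N\wt G$; symmetrically $H=M\wt H$. Together with $\wt N=N\cap\wt G$ and $\wt M=M\cap\wt H$, this yields canonical identifications $G/N\cong\wt G/\wt N$, $H/M\cong\wt H/\wt M$, $G/N\cong H/M$, and hence $[N:\wt N]=[M:\wt M]=[G:\wt G]=[H:\wt H]$. One may therefore fix a left transversal $T\subseteq M$ common to $\wt N$ in $N$, $\wt M$ in $M$, $\wt H$ in $H$, and $\wt G$ in $G$. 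A short orbit computation, using $G=N\wt G$ and the $\wt G$-invariance of $\wt\vartheta$, moreover shows $\vartheta$ is $G$-invariant and $\varphi$ is $H$-invariant, so the target character triples are well-defined.

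Second, using this common transversal I would construct $\mathcal P$ (respectively $\mathcal P'$) as the projective representation of $G$ (respectively $H$) induced from $(\wt G,\wt{\mathcal P})$ (respectively $(\wt H,\wt{\mathcal P}')$) via the standard block-matrix formula. The choice $T\subseteq M\leq H$ ensures that the factor set of $\mathcal P$ restricted to $H\times H$ coincides with that of $\mathcal P'$. The irreducibility hypotheses $\wt\vartheta^N=\vartheta$ and $\wt\varphi^M=\varphi$ guarantee that $\mathcal P|_N$ and $\mathcal P'|_M$ afford $\vartheta$ and $\varphi$ respectively. The irreducible-induction hypothesis at every intermediate level $N\leq J\leq G$ further shows that the induced character-level bijection $\irr(J\mid\vartheta)\to\irr(J\cap H\mid\varphi)$ factors as $\ind_{\wt J\cap H}^{J\cap H}\circ\wt\sigma_{\wt J}\circ(\ind_{\wt J}^{J})^{-1}$, where $\wt\sigma_{\wt J}$ is the bijection at level $\wt J$ coming from $(\wt G,\wt N,\wt\vartheta)\geq_c(\wt H,\wt M,\wt\varphi)$, thereby transferring the compatibility axioms between nested levels from the $\wt G$-level to the $G$-level by routine functoriality of induction.

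The remaining axiom is the centralizer/scalar condition: for $c\in\c_G(N)$ the matrix $\mathcal P(c)=\mathcal P'(c)$ should be scalar and reflect the central character of $\vartheta$. Since $\c_G(N)$ need not lie in $\wt G$, direct diagonalization along $T$ is not automatic; instead one decomposes $c=\wt hm$ with $\wt h\in\wt H$, $m\in M$ forced by $c\in H=\wt HM$, and uses that both $\wt h$ and $m$ commute with $T\subseteq N$ (since $c$ does) to check that the permutation action of $c$ on the blocks is trivial. The scalar value on each block then reduces to $\wt{\mathcal P}(\wt h)$ acting as a scalar (by the corresponding axiom for $(\wt{\mathcal P},\wt{\mathcal P}')$) together with the central character of $\vartheta$ evaluated on $m$. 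This is the principal technical obstacle: the careful bookkeeping of factor-set contributions along the transversal, which the parallel construction of $\mathcal P$ and $\mathcal P'$ via the common $T\subseteq M$ is designed to absorb, so that the equality $\mathcal P(c)=\mathcal P'(c)$ ultimately follows from the scalar identity for $(\wt{\mathcal P},\wt{\mathcal P}')$ on $\c_G(N)\cap\wt G$.
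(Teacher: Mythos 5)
Your overall strategy --- inducing the pair $(\wt{\P},\wt{\P}')$ from $(\wt{G},\wt{H})$ to $(G,H)$ and verifying the axioms of $\geq_c$ for the induced pair --- is exactly the paper's proof, which simply invokes the construction of \cite[Theorem 3.14]{Nav-Spa14I}. However, two of the concrete steps you propose do not go through as written. The claim that $[N:\wt{N}]=[M:\wt{M}]=[G:\wt{G}]=[H:\wt{H}]$ and that one may choose a single transversal $T\subseteq M$ simultaneously for $\wt{N}$ in $N$, $\wt{M}$ in $M$, $\wt{H}$ in $H$ and $\wt{G}$ in $G$ is false under the stated hypotheses: it would force $N=\wt{N}M$, equivalently $G=\wt{G}H$, which is not implied by $G=NH$, $H=\wt{H}M$ and $G=N\wt{G}$. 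For instance, take $G=N\times C$ with $\z(N)=1$, $H=\wt{H}=C$, $M=\wt{M}=1$ and $\wt{G}=\wt{N}\times C$ for a proper subgroup $\wt{N}<N$ admitting $\wt{\vartheta}$ with $\wt{\vartheta}^N$ irreducible; every hypothesis of the lemma holds, yet $[N:\wt{N}]>1=[M:\wt{M}]$. Since the common transversal is precisely your device for matching the factor sets of $\P_H$ and $\P'$, this step must be replaced. The correct mechanism is that the factor set of $\wt{\P}$ is constant on cosets of $\wt{N}$ and therefore transports to a factor set of $G$ through $G/N\cong\wt{G}/\wt{N}$ independently of any transversal, and its agreement with the factor set of $\P'$ on $H\times H$ follows from the commutativity of the square formed by $H/M\cong\wt{H}/\wt{M}$ and $G/N\cong\wt{G}/\wt{N}$.

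Your treatment of the scalar condition on $\c_G(N)$ also misses the actual difficulty. From $c=\wt{h}m$ centralizing $N$ one cannot deduce that $\wt{h}$ and $m$ separately commute with the transversal. More to the point, with a transversal $T$ of $\wt{G}$ in $G$ contained in $N$, a direct computation shows that the block-permutation pattern of $\P(c)$ is trivial exactly when $c\in\wt{G}$ (for $c\in\c_G(N)\setminus\wt{G}$ every diagonal block of $\P(c)$ vanishes, so $\P(c)$ cannot be scalar). The whole verification therefore hinges on locating $\c_G(N)$ inside $\wt{G}$, a fact which holds where the lemma is applied in Proposition \ref{prop:Constructing bijections over bijections} (there $\wt{G}$ is a stabilizer of characters of normal subgroups that $\c_G(N)$ centralizes) but which your decomposition $c=\wt{h}m$ neither establishes nor uses. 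These are the two places where the argument has genuine content, and both are gaps in the proposal even though the route you chose is the intended one.
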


\begin{proof}
Consider a pair of projective representations $(\wt{\P},\wt{\P}')$ associated to $(\wt{G},\wt{N},\wt{\vartheta})\geq_c(\wt{H},\wt{M},\wt{\varphi})$. Arguing as in the proof of \cite[Theorem 3.14]{Nav-Spa14I}, we construct the induced projective representations $\P:=(\wt{\P})^G$ of $G$ and $\P':=(\wt{\P}')^H$ of $H$ associated respectively to $\vartheta$ and $\varphi$. Then $(\P,\P')$ is associated to $(G,N,\vartheta)\geq_c(H,M,\varphi)$.
\end{proof}

Next, we recall that the strong isomorphism of character triples associated to central isomorphic character triples (see \cite[Theorem 2.2]{Spa18} and \cite[Theorem 10.13 and Problem 10.4]{Nav18}) is compatible with the order relation $\geq_c$.

\begin{prop}
\label{prop:Bijections given by projective representations for double relations}
Let $(G,N,\vartheta)\geq_c(H,M,\varphi)$ via $(\P,\P')$ and consider the associated $\n_H(J)$-equivariant bijection $\sigma_J:\irr(J\mid \vartheta)\to\irr(J\cap H\mid \varphi)$ (see \cite[Theorem 2.2]{Spa18}). Then
$$\left(\n_G(J)_\psi,J,\psi\right)\geq_c\left(\n_H(J)_\psi,J\cap H,\sigma_J(\psi)\right),$$
for every $\psi\in\irr(J\mid \vartheta)$.
\end{prop}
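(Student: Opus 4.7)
The plan is to transfer the given pair of projective representations $(\P,\P')$ witnessing $(G,N,\vartheta)\geq_c(H,M,\varphi)$ to a new pair $(\mathcal{Q},\mathcal{Q}')$ which witnesses the claim. Set $T_1:=\n_G(J)_\psi$ and $T_2:=\n_H(J)_\psi$. A short computation using $G=NH$, $\c_G(N)\leq H$, $N\cap H=M$ and $N\leq J$ shows that $T_2=T_1\cap H$, $T_1=NT_2$, $N\cap T_2=M$ and $\c_{T_1}(N)\leq T_2$. These are exactly the structural hypotheses required to speak of a $\geq_c$-relation for character triples of the form $(T_1,N,-)$ and $(T_2,M,-)$, and they supply a canonical isomorphism $T_1/N\cong T_2/M$ compatible with the central isomorphism coming from $(\P,\P')$.

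Next, I would recall how $\sigma_J$ is built in \cite[Theorem 2.2]{Spa18}. Given $\psi\in\irr(J\mid\vartheta)$, there is an irreducible projective representation $\mathcal{D}$ of $J$, trivial on $N$ and whose factor set is the inverse of that of $\P|_J$, such that $\mathcal{D}\otimes\P|_J$ is an ordinary representation affording $\psi$. Transporting $\mathcal{D}$ along $J/N\cong(J\cap H)/M$ yields a projective representation $\mathcal{D}^{(H)}$ of $J\cap H$ (trivial on $M$) whose factor set inverts that of $\P'|_{J\cap H}$, and by construction $\sigma_J(\psi)$ is the character of $\mathcal{D}^{(H)}\otimes\P'|_{J\cap H}$.

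Since $T_1$ stabilises $\psi$, it stabilises $\mathcal{D}$ up to projective equivalence, so by standard Clifford theory $\mathcal{D}$ admits an irreducible projective extension $\wt{\mathcal{D}}$ to $T_1$ that is trivial on $N$. Transporting $\wt{\mathcal{D}}$ along $T_1/N\cong T_2/M$ produces a projective representation $\wt{\mathcal{D}}^{(H)}$ of $T_2$ extending $\mathcal{D}^{(H)}$ with the same factor set as $\wt{\mathcal{D}}$ under the identification. I would then set
$$\mathcal{Q}:=\wt{\mathcal{D}}\otimes\P|_{T_1}\qquad\text{and}\qquad\mathcal{Q}':=\wt{\mathcal{D}}^{(H)}\otimes\P'|_{T_2}.$$
By construction $\mathcal{Q}$ restricts on $J$ to an ordinary representation affording $\psi$ and $\mathcal{Q}'$ restricts on $J\cap H$ to one affording $\sigma_J(\psi)$, while the factor sets of $\mathcal{Q}$ and $\mathcal{Q}'$ coincide because both constituent factor sets do.

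The main obstacle I foresee is the final scalar check: the definition of $\geq_c$ in \cite[Definition 2.7]{Spa18} requires not merely equal factor sets but equality of scalars on the image of a certain central subgroup. The choice of projective extension $\wt{\mathcal{D}}$ is only determined up to a linear character of $T_1/J$, so one must show that the companion choice on the $H$-side produces matching scalars on central elements. This should follow from the triviality of $\wt{\mathcal{D}}$ on $N$ together with the naturality of the isomorphism $T_1/N\cong T_2/M$ built into the given pair $(\P,\P')$, but verifying this carefully is the delicate point of the argument.
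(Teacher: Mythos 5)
Your proposal is correct and follows essentially the same route as the paper: write the representation affording $\psi$ as $\P_J\otimes\mathcal{Q}$ with $\mathcal{Q}$ trivial on $N$, extend the Gallagher factor $\mathcal{Q}$ to $\n_G(J)_\psi$, and tensor back with $\P$ and $\P'$ on the two stabilisers. The only cosmetic difference is that the paper obtains the extension of $\mathcal{Q}$ indirectly (first extending the ordinary representation of $\psi$ to a projective representation of $\n_G(J)_\psi$ via \cite[Theorem 5.5]{Nav18} and then dividing by $\P$, using \cite[Theorem 8.16]{Nav98}), which is precisely the justification for the step you label ``by standard Clifford theory''.
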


\begin{proof}
Let $(\P,\P')$ be associated with $(G,N,\vartheta)\geq_c(H,M,\varphi)$. By \cite[Theorem 2.2]{Spa18} or \cite[Theorem 10.13]{Nav18} there exists a projective representation $\mathcal{Q}$ of $J$ with $N\leq \ker(\mathcal{Q})$ such that $\P_J\otimes\mathcal{Q}$ and $\P'_{J\cap H}\otimes \mathcal{Q}_{J\cap H}$ afford respectively $\psi$ and $\sigma_J(\psi)$. By \cite[Theorem 5.5]{Nav18} there exists a projective representation $\mathcal{D}$ of $\n_G(J)_\psi$ such that $\mathcal{D}_J=\P_J\otimes \mathcal{Q}_J$ and, arguing as in \cite[p. 707]{Nav-Spa14I}, relying on the proof of \cite[Theorem 8.16]{Nav98} we can find a projective representation $\wh{\mathcal{Q}}$ of $\n_G(J)_\psi$ satisfying $\mathcal{D}=\P_{\n_G(J)_\psi}\otimes \wh{\mathcal{Q}}$. Set $\mathcal{D}':=\P'_{\n_H(J)_\psi}\otimes \wh{\mathcal{Q}}_{\n_H(J)_\psi}$. Then $(\mathcal{D},\mathcal{D}')$ is associated to $(\n_G(J)_\psi,J,\psi)\geq_c(\n_H(J)_\psi,J\cap H,\sigma_J(\psi))$.
\end{proof}

We also need another basic observation that follows directly from the definition of $\geq_c$.

\begin{lem}
\label{lem:Basic properties of H-triples isomorphisms}
Let $(G,N,\vartheta)\geq_c(H,M,\varphi)$. Then $(J,N,\vartheta)\geq_c(J\cap H,M,\varphi)$, for every $N\leq J\leq G$.
\end{lem}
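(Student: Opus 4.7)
The plan is to use restriction of the witnessing pair of projective representations. I would take a pair $(\P,\P')$ of projective representations that gives $(G,N,\vartheta)\geq_c(H,M,\varphi)$ in the sense of \cite[Definition 2.7]{Spa18}, and then argue that the restricted pair $(\P_J,\P'_{J\cap H})$ witnesses $(J,N,\vartheta)\geq_c(J\cap H,M,\varphi)$.

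First I would verify the underlying group-theoretic hypotheses. By Dedekind's modular law applied to $N\leq J\leq G=NH$, one has $J=J\cap NH=N(J\cap H)$, and the assumption $\c_G(N)\leq H$ yields $\c_J(N)=J\cap\c_G(N)\leq J\cap H$. Moreover $N\cap(J\cap H)=N\cap H=M$ since $N\leq J$. So the structural part of the definition of $\geq_c$ transfers cleanly from the ambient pair to the restricted one, and in particular the index and quotient conditions needed to make sense of restricting $\P$ to $J$ and $\P'$ to $J\cap H$ are automatic.

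Next I would check that the remaining representation-theoretic clauses in the definition of $\geq_c$ are preserved. These are local statements about the values of the two projective representations: that $\P_N$ affords $\vartheta$, that $\P'_M$ affords $\varphi$, that the factor sets of $\P$ and $\P'$ agree on $H\times H$, and that elements of $\c_G(N)$ act by the appropriate scalars in a compatible way on both sides. Each of these properties is preserved when $\P$ is restricted from $G$ to $J$ and $\P'$ is restricted from $H$ to $J\cap H$, because $N\leq J$ and $M\leq J\cap H$, and because the factor set of a restricted projective representation is simply the restriction of the original factor set.

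I do not expect a genuine obstacle here: the lemma is essentially a functoriality assertion for $\geq_c$ under the inclusion $J\leq G$, and every clause of the definition is stable under restriction to subgroups containing $N$ and $M$ respectively. The only thing that might deserve a careful word is the verification that the central scalar condition $\P(z)=\P'(z)\cdot I$ (for $z\in\c_G(N)$) survives restriction, but since $\c_J(N)\leq\c_G(N)\cap J\leq J\cap H$ this is immediate.
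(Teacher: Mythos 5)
Your restriction argument is correct and is exactly the intended verification: the paper states this lemma without proof as an observation that "follows directly from the definition of $\geq_c$," and restricting the witnessing pair $(\P,\P')$ to $(\P_J,\P'_{J\cap H})$ is precisely how one checks it. The group-theoretic points you verify (Dedekind giving $J=N(J\cap H)$, $\c_J(N)\leq J\cap H$, and $N\cap J\cap H=M$) together with the stability of the factor-set and scalar conditions under restriction are all that is needed.
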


Given a bijection between characters sets which is compatible with $\geq_c$, we now show how to obtain another bijection lying over the starting one and with similar compatibility properties. To do so we apply Lemma \ref{lem:Irreducible induction} and Proposition \ref{prop:Bijections given by projective representations for double relations}.

\begin{prop}
\label{prop:Constructing bijections over bijections}
Let $K\unlhd A$ and $A_0\leq A$ such that $A=KA_0$. For every $H\leq A$ set $H_0:=H\cap A_0$. Let $\mathcal{S}\subseteq\irr(K)$ and $\mathcal{S}_0\subseteq \irr(K_0)$ be $A_0$-stable subsets and assume there exists an $A_0$-equivariant bijection
$$\Psi:\mathcal{S}\to\mathcal{S}_0$$
such that
$$\left(A_{\vartheta},K,\vartheta\right)\geq_c\left(A_{0,\vartheta},K_0,\Psi(\vartheta)\right),$$
for every $\vartheta\in\mathcal{S}$. Then, for every $K\leq J\leq A$, there exists an $\n_{A_0}(J)$-equivariant bijection
$$\Phi:\irr(J\mid \mathcal{S})\to\irr(J_0\mid \mathcal{S}_0)$$ 
such that
$$\left(\n_A(J)_\chi,J,\chi\right)\geq_c\left(\n_{A_0}(J)_\chi,J_0,\Phi(\chi)\right),$$
for every $\chi\in\irr(K\mid\mathcal{S})$.
Moreover, if $\mathcal{S}\subseteq\irr_{p',Q}(K)$, $\mathcal{S}_0\subseteq \irr_{p',Q}(K_0)$ and $\n_A(Q)\leq A_0$ for some $Q\in\syl_p(J)$, then $\Phi$ is a $\n_A(Q,J)$-equivariant bijection
$$\Phi:\irr_{p'}(J\mid \mathcal{S})\to\irr_{p'}(J_0\mid \mathcal{S}_0).$$
\end{prop}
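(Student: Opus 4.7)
The strategy is to combine the Clifford correspondence on both sides with Proposition \ref{prop:Bijections given by projective representations for double relations} applied at the stabilizer level, and then to lift the resulting central isomorphism from $J_\vartheta$ up to $J$ via Lemma \ref{lem:Irreducible induction}. From $A=KA_0$ and $K\leq J$ the modular law yields $J=KJ_0$ and $\n_A(J)=J\n_{A_0}(J)$, so for each $\vartheta\in\mathcal{S}$ the $J$-orbit of $\vartheta$ coincides with its $J_0$-orbit, and $J_{0,\vartheta}:=J_0\cap A_{0,\vartheta}=J_\vartheta\cap A_0$. Fixing a system of $J$-orbit representatives in $\mathcal{S}$ thus transports via the $A_0$-equivariant map $\Psi$ to a system of $J_0$-orbit representatives in $\mathcal{S}_0$, reducing the construction of $\Phi$ to producing, orbit by orbit, a bijection $\Phi_\vartheta:\irr(J\mid\vartheta)\to\irr(J_0\mid\Psi(\vartheta))$.

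For each such $\vartheta$ I would apply Proposition \ref{prop:Bijections given by projective representations for double relations} to $(A_\vartheta,K,\vartheta)\geq_c(A_{0,\vartheta},K_0,\Psi(\vartheta))$ with intermediate subgroup $J_\vartheta$, obtaining a bijection
$$\sigma_\vartheta:\irr(J_\vartheta\mid\vartheta)\to\irr(J_{0,\vartheta}\mid\Psi(\vartheta))$$
together with the refined central isomorphism $(\n_{A_\vartheta}(J_\vartheta)_\psi,J_\vartheta,\psi)\geq_c(\n_{A_{0,\vartheta}}(J_\vartheta)_\psi,J_{0,\vartheta},\sigma_\vartheta(\psi))$ for every $\psi$. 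Composing $\sigma_\vartheta$ with the Clifford correspondences $\irr(J_\vartheta\mid\vartheta)\leftrightarrow\irr(J\mid\vartheta)$ and $\irr(J_{0,\vartheta}\mid\Psi(\vartheta))\leftrightarrow\irr(J_0\mid\Psi(\vartheta))$ and then gluing across orbit representatives defines $\Phi$. Its $\n_{A_0}(J)$-equivariance follows from the $A_0$-equivariance of $\Psi$ together with the naturality of both Clifford's correspondence and Proposition \ref{prop:Bijections given by projective representations for double relations}.

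To deduce the central isomorphism $(\n_A(J)_\chi,J,\chi)\geq_c(\n_{A_0}(J)_\chi,J_0,\Phi(\chi))$, I would first invoke Lemma \ref{lem:Basic properties of H-triples isomorphisms} to restrict the isomorphism above to the subgroup $\n_A(J)_{\chi,\vartheta}$, which is contained in $\n_{A_\vartheta}(J_\vartheta)_\psi$ by a naturality check (any $a\in A$ normalizing $J$ and fixing both $\chi$ and $\vartheta$ automatically normalizes $J_\vartheta$ and fixes $\psi$). This gives
$$(\n_A(J)_{\chi,\vartheta},J_\vartheta,\psi)\geq_c(\n_{A_0}(J)_{\chi,\vartheta},J_{0,\vartheta},\sigma_\vartheta(\psi)).$$
Lemma \ref{lem:Irreducible induction} is then applied with $G=\n_A(J)_\chi$, $\wt{G}=\n_A(J)_{\chi,\vartheta}$, $H=\n_{A_0}(J)_\chi$ and $N=J$: the factorizations $G=NH$ and $H=\wt{H}M$ both rest on transitivity of $J_0$ on the $J$-orbit of $\vartheta$; the centralizer condition $\c_G(N)\leq H$ is implicit in the hypothesis, since $(A_\vartheta,K,\vartheta)\geq_c(A_{0,\vartheta},K_0,\Psi(\vartheta))$ forces $\c_A(K)\leq A_0$ and $\c_A(J)\leq\c_A(K)$; the induction-bijection hypothesis is the iterated Clifford correspondence $\irr(J'_\vartheta\mid\psi)\to\irr(J'\mid\chi)$ for every $J\leq J'\leq\n_A(J)_\chi$.

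For the \emph{moreover} clause, $\n_A(Q)\leq A_0$ forces $Q\leq A_0$, hence $Q\leq J_0$ is a Sylow $p$-subgroup of $J_0$, and $Q$-invariance of the characters in $\mathcal{S}$ and $\mathcal{S}_0$ gives $Q\leq J_\vartheta\cap J_{0,\vartheta}$, so the Clifford indices $[J:J_\vartheta]$ and $[J_0:J_{0,\vartheta}]$ are $p'$-numbers. The degree identity $\sigma_\vartheta(\psi)(1)/\Psi(\vartheta)(1)=\psi(1)/\vartheta(1)$, coming from the shared factor set of the projective representations produced by Proposition \ref{prop:Bijections given by projective representations for double relations}, shows that $\Phi$ preserves $p'$-degree, while $\n_A(Q,J)\leq\n_A(Q)\leq A_0$ reduces $\n_A(Q,J)$-equivariance to a consequence of the $\n_{A_0}(J)$-equivariance already obtained. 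The main technical obstacle is verifying the induction-bijection hypothesis of Lemma \ref{lem:Irreducible induction}, which amounts to unfolding the Clifford correspondence through the chain $J_\vartheta\leq J'_\vartheta\leq J'_\chi\leq J'$ for every intermediate $J'\supseteq J$ and checking naturality at each step.
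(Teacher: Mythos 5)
Your proposal matches the paper's proof in all essentials: both reduce to the stabilizer $J_\vartheta$ via the Clifford correspondence, apply Proposition~\ref{prop:Bijections given by projective representations for double relations} there to obtain the bijection $\sigma_{J_\vartheta}$ together with the central isomorphism at that level, push the result back up to $J$ with Lemma~\ref{lem:Irreducible induction} (using $\n_A(J)_\chi=J\,\n_{A_0}(J)_\chi$ and the identification of $\n_A(J)_{\chi,\vartheta}$ with the stabilizer of the Clifford correspondent), and settle the $p'$-degree and $\n_A(Q,J)$-equivariance claims by Clifford theory. The only cosmetic difference is that the paper organizes the construction around $\n_{A_0}(J)$-transversals of $\mathcal{S}$ and of $\irr(J\mid\mathcal{S})$ and extends by equivariance, rather than gluing over $J$-orbit representatives.
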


\begin{proof}
Consider an $\n_{A_0}(J)$-transversal $\mathbb{S}$ in $\mathcal{S}$ and define $\mathbb{S}_0:=\{\Psi(\vartheta)\mid \vartheta\in\mathbb{S}\}$. Since $\Psi$ is $A_0$-equivariant, it follows that $\mathbb{S}_0$ is an $\n_{A_0}(J)$-transversal in $\mathcal{S}_0$. For every $\vartheta\in\mathbb{S}$, with $\vartheta_0:=\Psi(\vartheta)\in\mathbb{S}_0$, we fix a pair of projective representations $(\mathcal{P}^{(\vartheta)},\mathcal{P}^{(\vartheta_0)}_0)$ giving $(A_\vartheta,K,\vartheta)\geq_c(A_{0,\vartheta},K_0,\vartheta_0)$. Now, let $\mathbb{T}$ be an $\n_{A_0}(J)$-transversal in $\irr(J\mid \mathcal{S})$ such that every character $\chi\in\mathbb{T}$ lies above a character $\vartheta\in\mathbb{S}$ (this can be done by the choice of $\mathbb{S}$). Moreover, as $A=KA_0$, we have $J=KJ_0$ and therefore every $\chi\in\mathbb{T}$ lies over a unique $\vartheta\in\mathbb{S}$ by Clifford's theorem.

For $\chi\in\mathbb{T}$ lying over $\vartheta\in\mathbb{S}$, let $\varphi\in\irr(J_\vartheta\mid \vartheta)$ be the Clifford correspondent of $\chi$ over $\vartheta$. Set $\vartheta_0:=\Psi(\vartheta)\in\mathbb{S}_0$ and consider the $\n_{A_0}(J)_\vartheta$-equivariant bijection $\sigma_{J_\vartheta}:\irr(J_\vartheta\mid \vartheta)\to\irr(J_{0,\vartheta}\mid \vartheta_0)$ induced by our choice of projective representations $(\mathcal{P}^{(\vartheta)},\mathcal{P}_0^{(\vartheta_0)})$. Let $\varphi_0:=\sigma_{J_\vartheta}(\varphi)$. Since $\Psi$ is $A_0$-equivariant, we deduce that $J_{0,\vartheta}=J_{0,\vartheta_0}$ and therefore $\Phi(\chi):=\varphi^{J_0}$ is irreducible by the Clifford correspondence. Then, we define
$$\Phi\left(\chi^x\right):=\Phi(\chi)^x,$$ 
for every $\chi\in\mathbb{T}$ and $x\in \n_{A_0}(J)$. This defines an $\n_{A_0}(J)$-equivariant bijection $\Psi:\irr(J\mid \mathcal{S})\to\irr(J_0\mid \mathcal{S}_0)$.

We now prove the statement on character triples. By hypothesis we know that
$$\left(A_{\vartheta},K,\vartheta\right)\geq_c\left(A_{0,\vartheta},K_0,\vartheta_0\right)$$
and Proposition \ref{prop:Bijections given by projective representations for double relations} implies
$$\left(A_{J,\vartheta,\varphi},J_{\vartheta},\varphi\right)\geq_c\left(A_{0,J,\vartheta,\varphi},J_{0,\vartheta},\varphi_0\right).$$
Noticing that $A_{J,\vartheta,\varphi}=A_{J,\vartheta,\chi}$ and that $A_{J,\chi}=JA_{J,\vartheta,\chi}$ it follows from Lemma \ref{lem:Irreducible induction} that
$$\left(A_{J,\chi},J,\varphi\right)\geq_c\left(A_{0,J,\chi},J_0,\chi_0\right).$$
The last part of the statement follows immediately by Clifford theory.
\end{proof}

The final result of this section allows to construct centrally ordered character triples when dealing with a situation similar to the one described in Gallagher's theorem.

\begin{prop}
\label{prop:Triples and multiplication}
Let $N\unlhd G$ and $H\leq G$ with $G=NH$ and set $M:=N\cap H$. Let $K\unlhd G$ with $K\leq M$ and consider a $G$-invariant $\zeta\in\irr_{p'}(N)$ such that $\zeta_K\in\irr(K)$. Let $\overline{G}:=G/K$, $\overline{N}:=N/K$, $\overline{H}:=H/K$ and $\overline{M}:=M/K$ and suppose that $(\overline{G},\overline{N},\overline{\chi})\geq_c(\overline{H},\overline{M},\overline{\psi})$, for some $\overline{\chi}\in\irr_{p'}(\overline{N})$ and $\overline{\psi}\in\irr_{p'}(\overline{M})$. Then
$$(G,N,\chi\zeta)\geq_c(H,M,\psi\zeta_M),$$
where $\chi\in\irr(N)$ and $\psi\in\irr(M)$ are the lifts respectively of $\overline{\chi}$ and $\overline{\psi}$.
\end{prop}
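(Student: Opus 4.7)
The plan is to obtain the required central isomorphism by tensoring a lift of the quotient data with a projective representation extending $\zeta$. First I would verify that $\chi\zeta\in\irr(N)$ and $\psi\zeta_M\in\irr(M)$ so that the conclusion makes sense. For $\chi\zeta$ this is Gallagher's theorem applied to $K\unlhd N$ (noting that $\zeta$ extends $\zeta_K\in\irr(K)$ and that $\chi$ is inflated from $N/K$). For $\psi\zeta_M$ it suffices to show that $\zeta_M$ is itself irreducible: since $\zeta_K$ is $N$-invariant, hence $M$-invariant, Clifford's theorem gives $\xi_K=e\zeta_K$ for some $e\geq 1$ for each irreducible constituent $\xi$ of $\zeta_M$; restricting the decomposition $\zeta_M=\sum_i m_i\xi_i$ back to $K$ then forces $\sum_i m_i e_i=1$, so $\zeta_M\in\irr(M)$, and Gallagher applied to $K\unlhd M$ delivers $\psi\zeta_M\in\irr(M)$.

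Next I would fix a pair $(\overline{\P},\overline{\P}')$ realizing the hypothesized central isomorphism in the quotient and inflate them through the natural maps to projective representations $\P_1$ of $G$ and $\P_1'$ of $H$, both trivial on $K$, affording $\chi$ on $N$ and $\psi$ on $M$ respectively. The $G$-invariance of $\zeta$ supplies, via a standard central-extension argument, a projective representation $\mathcal{Q}$ of $G$ with $\mathcal{Q}_N$ affording $\zeta$, and $\mathcal{Q}_H$ is then a projective representation of $H$ whose restriction $\mathcal{Q}_M$ affords $\zeta_M$. The candidate pair is
$$\P:=\P_1\otimes\mathcal{Q}\qquad\text{and}\qquad\P':=\P_1'\otimes\mathcal{Q}_H,$$
and by construction $\P_N$ affords $\chi\zeta$ while $\P'_M$ affords $\psi\zeta_M$.

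It remains to verify the conditions of $\geq_c$ for $(\P,\P')$. Since factor sets multiply under tensor products, the factor set of $\P$ on $H\times H$ equals that of $\P'$: the $\P_1$ contribution matches $\P_1'$ by the quotient hypothesis, and the $\mathcal{Q}$ contribution matches $\mathcal{Q}_H$ by restriction. For the central compatibility, one first observes that $\c_G(N)\leq H$ descends from $\c_{\overline{G}}(\overline{N})\leq\overline{H}$ (using $K\leq H$), and then for $c\in\c_G(N)$ the equality $\P(c)=\P_1(c)\otimes\mathcal{Q}(c)=\P_1'(c)\otimes\mathcal{Q}_H(c)=\P'(c)$ follows from the corresponding property of $(\overline{\P},\overline{\P}')$. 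The main obstacle I anticipate is the irreducibility of $\zeta_M$, which is not a purely formal consequence of $\zeta_K\in\irr(K)$ and requires the Clifford-theoretic counting above; once that step is in place, the remainder is routine tensor-product bookkeeping.
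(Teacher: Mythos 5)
Your proof takes essentially the same route as the paper's: lift the pair of projective representations from the quotient, tensor with a projective representation $\mathcal{Q}$ of $G$ associated with the $G$-invariant character $\zeta$, and verify the factor-set and centralizer conditions using $\c_G(N)K/K\leq\c_{G/K}(N/K)$. The extra check that $\zeta_M$ (hence $\psi\zeta_M$) is irreducible is a detail the paper leaves implicit and is welcome; the only imprecision is that for $c\in\c_G(N)$ the condition to verify is equality of the \emph{scalars} attached to $\P(c)$ and $\P'(c)$ (the matrices have different sizes), which follows since $\mathcal{Q}(c)$ is scalar by Schur's lemma and the $\P_1$, $\P_1'$ scalars agree by the quotient hypothesis.
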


\begin{proof}
Let $(\overline{\P},\overline{\P}')$ be a pair or projective representations associated to $(\overline{G},\overline{N},\overline{\chi})\geq_c(\overline{H},\overline{M},\overline{\psi})$ and consider the corresponding lifts $\P$ and $\P'$. Let $\mathcal{Q}$ be a projective representation of $G$ associated to $\zeta$ as in \cite[Definition 5.2]{Nav18}. Then $\P\otimes \mathcal{Q}$ and $\P'\otimes\mathcal{Q}_H$ are projective representations of $G$ and $H$ associated respectively to $\chi\zeta$ and $\psi\zeta_M$. Since $\c_G(N)K/K\leq \c_{G/K}(N/K)$, we conclude from the assumption that the pair $(\P\otimes\mathcal{Q},\P'\otimes\mathcal{Q}_H)$ gives $(G,N,\chi\zeta)\geq_c(H,M,\psi\zeta_M)$.
\end{proof}

\section{The inductive condition}

Our aim in this section will be to show how to obtain good bijections for groups whose quotient over the centre is isomorphic to a direct product of (not necessarily isomorphic) non-abelian simple groups whose universal covering groups satisfy Conjecture \ref{conj:iMcK}. This is done in Corollary \ref{cor:Constructing bijections from iMcK} which will be the main result of this section. Observe that Corollary \ref{cor:Constructing bijections from iMcK} is a slight generalization of \cite[Theorem 10.25]{Nav18} and of \cite[Corollary 3.14]{Spa18}.

\begin{lem}
\label{lem:iMck for direct products}
Let $S$ be a non-abelian simple group whose universal covering group satisfies Conjecture \ref{conj:iMcK} for the prime number $p$. Consider a non-negative integer $n$ and let $\wt{X}:=X^n$ be the universal covering group of $\wt{S}:=S^n$. Let $\wt{P}$ be a Sylow $p$-subgroup of $\wt{X}$ and set $\wt{\Gamma}:=\aut(\wt{X})_{\wt{P}}$. Then, there exists a $\wt{\Gamma}$-invariant subgroup $\n_{\wt{X}}(\wt{P})\leq \wt{M}<\wt{X}$ and a $\wt{\Gamma}H$-equivariant bijection
$$\wt{\Omega}:\irr_{p'}\left(\wt{X}\right)\to\irr_{p'}\left(\wt{M}\right)$$
such that
$$\left(\wt{X}\rtimes \wt{\Gamma}_{\wt{\vartheta}},\wt{X},\wt{\vartheta}\right)\geq_c\left(\wt{M}\rtimes \wt{\Gamma}_{\wt{\vartheta}},\wt{M},\wt{\Omega}\left(\wt{\vartheta}\right)\right),$$
for every $\wt{\vartheta}\in\irr_{p'}(\wt{X})$.
\end{lem}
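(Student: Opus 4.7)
The plan is to apply the inductive hypothesis to the single factor $X$ and then assemble the data coordinate-by-coordinate using the wreath product structure of $\aut(\wt{X})$. Let $P_0\in\syl_p(X)$, so that $\wt{P}=P_0^n$, and set $\Gamma_0:=\aut(X)_{P_0}$. Since $X$ is the universal cover of a non-abelian simple group, the $n$ factors of $\wt{X}=X^n$ are canonically determined as the preimages in $\wt{X}$ of the simple factors of $\wt{X}/\z(\wt{X})=S^n$, so $\aut(\wt{X})=\aut(X)\wr S_n$ (using that $\aut(X)\to\aut(S)$ is an isomorphism since $X$ is perfect and $\z(X)$ is characteristic), and hence $\wt{\Gamma}=\Gamma_0\wr S_n$. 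Applying Conjecture \ref{conj:iMcK} to $X$ provides a $\Gamma_0$-stable proper subgroup $\n_X(P_0)\leq M_0<X$ together with a $\Gamma_0$-equivariant bijection $\Omega_0:\irr_{p'}(X)\to\irr_{p'}(M_0)$ such that
$$\left((X\rtimes\aut(X))_\vartheta,X,\vartheta\right)\geq_c\left(M_0(X\rtimes\Gamma_0)_\vartheta,M_0,\Omega_0(\vartheta)\right)$$
for every $\vartheta\in\irr_{p'}(X)$.

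I would then take $\wt{M}:=M_0^n$, so that $\n_{\wt{X}}(\wt{P})=\n_X(P_0)^n\leq\wt{M}<\wt{X}$ and $\wt{M}$ is $\wt{\Gamma}$-stable, and define a coordinate-wise bijection on outer tensor products by
$$\wt{\Omega}(\vartheta_1\times\cdots\times\vartheta_n):=\Omega_0(\vartheta_1)\times\cdots\times\Omega_0(\vartheta_n).$$
This is $\Gamma_0^n$-equivariant by the equivariance of $\Omega_0$ on each coordinate and is $S_n$-equivariant by the symmetry of the definition, so $\wt{\Omega}$ is $\wt{\Gamma}$-equivariant.

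The main obstacle is to upgrade $\wt{\Omega}$ to a central isomorphism of character triples. Fix $\wt{\vartheta}=\vartheta_1\times\cdots\times\vartheta_n$ and set $\varphi_i:=\Omega_0(\vartheta_i)$. The plan is first to partition $\{1,\ldots,n\}$ into the orbits of the $S_n$-component of $\wt{\Gamma}_{\wt{\vartheta}}$, which reduces the proof to the case where all $\vartheta_i$ lie in a single $\Gamma_0$-orbit; this reduction uses only the fact, verified directly from the definition of $\geq_c$ by taking outer tensor products of witnessing projective representations, that $\geq_c$ is compatible with direct products of character triples. After replacing the characters in a common orbit by a fixed representative I may assume $\vartheta_1=\cdots=\vartheta_n=:\vartheta$, $\varphi_1=\cdots=\varphi_n=:\varphi$, and $\wt{\Gamma}_{\wt{\vartheta}}=\Gamma_{0,\vartheta}\wr S_n$. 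Starting from a pair $(\P,\P')$ witnessing the single-factor relation for $\vartheta$, I would form the outer tensor power $\P^{\boxtimes n}$ on $\wt{X}\rtimes\Gamma_{0,\vartheta}^n$ and extend it to $\wt{X}\rtimes\wt{\Gamma}_{\wt{\vartheta}}$ by letting $S_n$ act via permutation of the tensor factors; the parallel construction on the $M_0$-side produces the matching projective representation. The verification that the resulting pair satisfies the cocycle and restriction conditions of $\geq_c$ reduces to the corresponding conditions for $(\P,\P')$, the $S_n$-extension contributing nothing new since it acts trivially on $\z(\wt{X})$. This verification, and the careful bookkeeping of the stabiliser subgroups under the wreath action, is the technical heart of the argument.
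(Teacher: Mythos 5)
The paper's entire proof of this lemma is the citation ``This is \cite[Theorem 3.12]{Spa18}'', and your plan is essentially a reconstruction of the proof of that cited theorem: the same decomposition $\aut(\wt{X})=\aut(X)\wr S_n$, the same coordinate-wise choice of $\wt{M}=M_0^n$ and $\wt{\Omega}=\Omega_0^{\times n}$, and the same reduction to a single $\Gamma_0$-orbit followed by an $S_n$-equivariant extension of tensor powers of projective representations. The one step you defer --- checking that $\P^{\boxtimes n}$ extends to the wreath product with a factor set still satisfying the conditions of $\geq_c$ --- is precisely the technical content that \cite{Spa18} supplies (via its results on central isomorphisms and wreath products), so your outline is correct but would need that lemma, or an equivalent computation, to be complete.
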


\begin{proof}
This is \cite[Theorem 3.12]{Spa18}.
\end{proof}

Now, proceeding as the proof of \cite[Theorem 10.25]{Nav18} we obtain the following result. Notice that this is just a version of \cite[Theorem 10.25]{Nav18} adapted to the more general case where $M$ does not need to coincide with the normaliser of a Sylow $p$-subgroup.

\begin{prop}
\label{prop:Constructing bijections from iMcK, perfect case}
Let $K\unlhd A$ be finite groups with $K=[K,K]$ and $K/\z(K)\simeq S^n$ for a non-abelian simple group $S$ whose universal covering group satisfies Conjecture \ref{conj:iMcK}. Let $P_0$ be a Sylow $p$-subgroup of $K$. Then there exists a $\n_A(P_0)$-invariant subgroup $\n_K(P_0)\leq M<K$ and a $\n_A(P_0)H$-equivariant bijection
$$\Omega:\irr_{p'}(K)\to\irr_{p'}(M)$$
such that
$$\left(A_\vartheta,K,\vartheta\right)\geq_c\left(M\n_A(P_0)_\vartheta,M,\Omega(\vartheta)\right),$$
for every $\vartheta\in\irr_{p'}(K)$.
\end{prop}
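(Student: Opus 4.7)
The strategy follows the proof of \cite[Theorem 10.25]{Nav18} closely, with the subgroup $\n_{\wt{X}}(\wt{P})$ there replaced by the subgroup $\wt{M}$ furnished by Lemma \ref{lem:iMck for direct products}. Since $K$ is perfect and $K/\z(K)\simeq S^n$, letting $X$ denote the universal covering group of $S$, the group $\wt{X}:=X^n$ is the universal covering group of $S^n$ and there is a canonical surjection $\pi:\wt{X}\to K$ with central kernel $Z\leq \z(\wt{X})$. The conjugation action of $A$ on $K$ induces automorphisms of $K/\z(K)\simeq S^n$ which, by universality, lift uniquely to automorphisms of $\wt{X}$ preserving $Z$; in this way $A$ acts on $\wt{X}$ compatibly with $\pi$.

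Choose a Sylow $p$-subgroup $\wt{P}$ of $\wt{X}$ with $\pi(\wt{P})=P_0$ (possible since $\pi$ sends Sylow $p$-subgroups of $\wt{X}$ to Sylow $p$-subgroups of $K$) and, after adjusting by an inner automorphism of $\wt{X}$, arrange that the lifted action of $\n_A(P_0)$ stabilises $\wt{P}$, so that this lifted action factors through $\wt{\Gamma}:=\aut(\wt{X})_{\wt{P}}$. Apply Lemma \ref{lem:iMck for direct products} to obtain a $\wt{\Gamma}$-stable subgroup $\n_{\wt{X}}(\wt{P})\leq \wt{M}<\wt{X}$ and a $\wt{\Gamma}$-equivariant bijection
$$\wt{\Omega}:\irr_{p'}(\wt{X})\to\irr_{p'}(\wt{M})$$
such that $(\wt{X}\rtimes \wt{\Gamma}_{\wt{\vartheta}},\wt{X},\wt{\vartheta})\geq_c(\wt{M}\rtimes \wt{\Gamma}_{\wt{\vartheta}},\wt{M},\wt{\Omega}(\wt{\vartheta}))$ for every $\wt{\vartheta}\in\irr_{p'}(\wt{X})$.

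Define $M:=\pi(\wt{M})$. Then $M<K$ because $Z\leq \wt{M}$ and $\wt{M}<\wt{X}$, while $M$ is $\n_A(P_0)$-stable because $\wt{M}$ is $\wt{\Gamma}$-stable and the lifted $\n_A(P_0)$-action lies in $\wt{\Gamma}$. A short Sylow argument (any lift to $\wt{X}$ of an element of $\n_K(P_0)$ normalises $\pi^{-1}(P_0)$, hence moves $\wt{P}$ to another Sylow $p$-subgroup of $\pi^{-1}(P_0)$, which may be brought back to $\wt{P}$ by conjugation in $\pi^{-1}(P_0)$) shows $\pi(\n_{\wt{X}}(\wt{P}))=\n_K(P_0)$, so $\n_K(P_0)\leq M$. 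Moreover $Z\leq \z(\wt{X})\leq \n_{\wt{X}}(\wt{P})\leq \wt{M}$, so characters of $K$ identify with characters of $\wt{X}$ trivial on $Z$, and characters of $M$ identify with characters of $\wt{M}$ trivial on $Z$.

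The definition of $\geq_c$ forces $\wt{\vartheta}|_Z=\wt{\Omega}(\wt{\vartheta})|_Z$ for every $\wt{\vartheta}\in\irr_{p'}(\wt{X})$, since both characters have the same linear character on $\c_{\wt{X}}(\wt{X})\cap \wt{M}=\z(\wt{X})\supseteq Z$. Consequently $\wt{\Omega}$ restricts to the desired bijection $\Omega:\irr_{p'}(K)\to \irr_{p'}(M)$, which inherits $\n_A(P_0)$-equivariance from the $\wt{\Gamma}$-equivariance of $\wt{\Omega}$. To obtain the centrally isomorphic character triple relation $(A_\vartheta,K,\vartheta)\geq_c(M\n_A(P_0)_\vartheta,M,\Omega(\vartheta))$, restrict the pair of projective representations provided by Lemma \ref{lem:iMck for direct products} along the map from $A$ into $\wt{X}\rtimes\wt{\Gamma}$ using Lemma \ref{lem:Basic properties of H-triples isomorphisms}, and then deflate by the central subgroup $Z$, which lies in the kernel of $\vartheta$ and $\Omega(\vartheta)$; this deflation is a standard operation on projective representations, analogous to the manipulation performed in the proof of Proposition \ref{prop:Bijections given by projective representations for double relations}. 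The main technical difficulty lies in coherently organising the lifting of the $A$-action to $\wt{X}$, the choice of $\wt{P}$ compatible with $\n_A(P_0)$, and the descent of the projective representations modulo $Z$; the remaining steps are a direct transcription of the argument in \cite[Theorem 10.25]{Nav18}.
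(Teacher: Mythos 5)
Your overall strategy coincides with the paper's: the proof is exactly "run the argument of \cite[Theorem 10.25]{Nav18} with the normaliser $\n_{\wt{X}}(\wt{P})$ replaced by the subgroup $\wt{M}$ supplied by Lemma \ref{lem:iMck for direct products}", and your reconstruction of the setup (realising $K$ as $\wt{X}/Z$, lifting the $A$-action to $\wt{X}$, choosing $\wt{P}$ over $P_0$, setting $M:=\pi(\wt{M})$, and cutting down $\wt{\Omega}$ to the characters trivial on $Z$ via the central-character condition built into $\geq_c$) is correct in all these steps.

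The one place where your argument, as written, does not go through is the final transfer of the relation $\bigl(\wt{X}\rtimes \wt{\Gamma}_{\wt{\vartheta}},\wt{X},\wt{\vartheta}\bigr)\geq_c\bigl(\wt{M}\rtimes \wt{\Gamma}_{\wt{\vartheta}},\wt{M},\wt{\Omega}(\wt{\vartheta})\bigr)$ to $\left(A_\vartheta,K,\vartheta\right)\geq_c\left(M\n_A(P_0)_\vartheta,M,\Omega(\vartheta)\right)$. There is no homomorphism from $A$ (or from $A_\vartheta$) into $\wt{X}\rtimes\wt{\Gamma}$ along which one could "restrict" the projective representations: the lifted action only gives a homomorphism $A_\vartheta\to\aut(\wt{X})$, whose image is not contained in $\wt{\Gamma}$ (elements of $K$ act as arbitrary inner automorphisms of $\wt{X}$, which do not normalise $\wt{P}$), and the map $\wt{X}\rtimes\wt{\Gamma}_{\wt{\vartheta}}\to\aut(\wt{X})$ is not injective, so one cannot factor through it either. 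Moreover Lemma \ref{lem:Basic properties of H-triples isomorphisms} only allows passage to intermediate subgroups $N\leq J\leq G$ of a fixed ambient group and cannot be used to move between $A_\vartheta$ and $\wt{X}\rtimes\wt{\Gamma}_{\wt{\vartheta}}$. What is actually needed here is the Frattini decomposition $A_\vartheta=K\n_A(P_0)_\vartheta$ together with the "butterfly"-type statement that $\geq_c$ depends only on the induced automorphism and central-character data (\cite[Theorem 2.16]{Spa18}, or \cite[Theorem 10.23]{Nav18}): one checks that $A_\vartheta$ and $\wt{X}\rtimes\wt{\Gamma}_{\wt{\vartheta}}$ induce the same subgroup of $\aut(\wt{X})$ and then invokes that theorem to descend the relation, the deflation by $Z$ being absorbed into this step. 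This is precisely how the proof of \cite[Theorem 10.25]{Nav18} concludes, so the gap is one of identifying the correct tool rather than of overall strategy; but as stated, "restricting along a map $A\to\wt{X}\rtimes\wt{\Gamma}$" is not a valid operation.
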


\begin{proof}
This follows from the proof of \cite[Theorem 10.25]{Spa18} by applying Lemma \ref{lem:iMck for direct products}.
\end{proof}

Finally, we consider the case where $K$ is not necessarily perfect. To do so, we have to deal with characters of central products (we refer the reader to \cite[Section 5]{Isa-Mal-Nav07} and \cite[Section 10.3]{Nav18} for the relevant notation). First, we need a lemma.

\begin{lem}
\label{lem:Central products and triple isomorphisms}
Let $(G,N,\vartheta)\geq_c(H,M,\varphi)$ and consider $C\leq \c_G(N)$. Let $\nu\in\irr(C\cap N)$ be the unique irreducible constituent of $\vartheta_{C\cap N}$ and $\varphi_{C\cap N}$. Then
$$\left(\n_G(C)_{\vartheta\boldsymbol{\cdot}\psi},N\boldsymbol{\cdot} C,\vartheta\boldsymbol{\cdot}\psi\right)\geq_c\left(\n_H(C)_{\varphi\boldsymbol{\cdot}\psi},M\boldsymbol{\cdot} C,\varphi\boldsymbol{\cdot} \psi\right),$$
for every $\psi\in\irr(C\mid \nu)$.
\end{lem}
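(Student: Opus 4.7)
The plan is to construct a pair of projective representations for the central product triples by combining $(\P, \P')$ with a projective representation extending $\psi$, adjusted by a scalar function to compensate for the $\nu$-twist coming from the overlap $C \cap N$. First, fix $(\P, \P')$ giving $(G,N,\vartheta) \geq_c (H,M,\varphi)$, chosen via \cite[Theorem 5.5]{Nav18} so that the factor set of $\P$ is trivial on $N \times G$ (and that of $\P'$ on $M \times H$). By the same theorem applied to $C \unlhd \n_G(C)_\psi$, pick a projective representation $\mathcal{Q}$ of $\n_G(C)_\psi$ with $\mathcal{Q}_C$ affording $\psi$ and factor set trivial whenever one argument lies in $C$.

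Set $U := \n_G(C)_{\vartheta\cdot\psi}$, $V := \n_H(C)_{\varphi\cdot\psi}$, $L := N \cdot C$, $L' := M \cdot C$. Since $\vartheta$ is $G$-invariant and $(\vartheta\cdot\psi)_C = \vartheta(1)\psi$, one has $U = \n_G(C)_\psi$ and similarly $V = \n_H(C)_\psi$. One verifies the ambient conditions required by $\geq_c$: $L \unlhd U$, $L' \unlhd V$, $U = LV$ (from $G = NH$), $L \cap V = L'$, and $\c_U(L) \leq \c_G(NC) \leq \c_G(N) \leq H$, so $\c_U(L) \leq V$. Moreover $C \cap N \leq \z(N) \cap \z(C)$, hence $\nu$ is linear.

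The core ingredient is a correcting scalar. The $\geq_c$ hypothesis gives $\P(c) = \P'(c) = \xi(c) I$ for $c \in C \leq \c_G(N)$, defining a scalar function $\xi$ on $C$. Applying Schur's lemma to $\mathcal{Q}_C$ combined with $N \leq \c_U(C)$ and the factor set triviality of $\mathcal{Q}$ with one argument in $C$, we obtain $\mathcal{Q}(n) = \eta(n) I$ for $n \in N$. Both $\xi$ and $\eta$ restrict to $\nu$ on $C \cap N$ by Schur applied at central $z \in C \cap N$. Consequently the assignment $nc \mapsto \xi(c)\eta(n)$ is well-defined on $L \simeq (N \times C)/\{(z,z^{-1}) : z \in C \cap N\}$ (since $\nu(z)\nu(z^{-1}) = 1$). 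Pick a scalar function $\mu' : V \to \mathbb{C}^\times$ extending $(\xi\eta)_{L'}$, and then $\mu : U \to \mathbb{C}^\times$ extending $(\xi\eta)_L$ and agreeing with $\mu'$ on $V$; this is unobstructed because scalar functions can be chosen freely off specified subsets, and the compatibility on $L \cap V = L'$ is automatic.

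Define $\mathcal{R}(u) := (\P(u) \otimes \mathcal{Q}(u))\mu(u)^{-1}$ on $U$ and analogously $\mathcal{R}'(v) := (\P'(v) \otimes \mathcal{Q}(v))\mu'(v)^{-1}$ on $V$. The verification splits into three checks. First, $\mathcal{R}_L$ affords $\vartheta \cdot \psi$: for $nc \in L$, the trivial factor sets give $\P(nc) = \xi(c)\P(n)$ and $\mathcal{Q}(nc) = \eta(n)\mathcal{Q}(c)$, whence $\mathcal{R}(nc) = \P(n) \otimes \mathcal{Q}(c)$ with trace $\vartheta(n)\psi(c) = (\vartheta\cdot\psi)(nc)$; similarly $\mathcal{R}'_{L'}$ affords $\varphi \cdot \psi$. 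Second, the factor sets of $\mathcal{R}$ and $\mathcal{R}'$ agree on $V \times V$ by the factor set agreement of $\P$ and $\P'$ on $H \times H$, the shared $\mathcal{Q}$, and $\mu_V = \mu'$. Third, for $c \in \c_U(L) \leq V$, $\mathcal{R}(c) = \mathcal{R}'(c)$ is scalar: use $\P(c) = \P'(c)$ from the hypothesis, scalarity of $\mathcal{Q}(c)$ by Schur on $\c_U(C)$, and $\mu(c) = \mu'(c)$ since $c \in V$. The main obstacle is the bookkeeping for the correcting scalar $\mu$ together with the verification of the first check; the whole construction succeeds precisely because $\xi$ and $\eta$ coincide with $\nu$ on $C \cap N$, which makes $\xi\eta$ a well-defined scalar function on $L$ in the first place.
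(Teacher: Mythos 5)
Your construction is headed in the right direction, but there is a genuine gap at the point where you introduce the correcting scalar $\mu$. For $\mathcal{R}:=(\P\otimes\mathcal{Q})\mu^{-1}$ to be a projective representation \emph{associated with} $(U,L,\vartheta\boldsymbol{\cdot}\psi)$ in the sense required by the definition of $\geq_c$ (see \cite[Definition 5.2]{Nav18}, \cite[Definition 2.1]{Spa18}), it is not enough that $\mathcal{R}_L$ affords $\vartheta\boldsymbol{\cdot}\psi$: one also needs $\mathcal{R}(lu)=\mathcal{R}(l)\mathcal{R}(u)$ and $\mathcal{R}(ul)=\mathcal{R}(u)\mathcal{R}(l)$ for all $l\in L$, $u\in U$, i.e.\ the factor set of $\mathcal{R}$ must be trivial whenever one argument lies in $L$. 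Writing $l=nc$ and using the triviality properties of $\alpha$ (on $N$) and $\alpha_{\mathcal{Q}}$ (on $C$), one computes $\mathcal{R}(l)\mathcal{R}(u)=\alpha(c,u)\,\alpha_{\mathcal{Q}}(n,cu)\,\mu(lu)\mu(l)^{-1}\mu(u)^{-1}\mathcal{R}(lu)$, so $\mu$ must satisfy $\mu(lu)=\mu(l)\mu(u)\alpha(c,u)^{-1}\alpha_{\mathcal{Q}}(n,cu)^{-1}$. Since $\alpha(c,u)$ and $\alpha_{\mathcal{Q}}(n,cu)$ are nontrivial in general, an arbitrary extension of $(\xi\eta)_L$ does \emph{not} work: your assertion that $\mu$ can be ``chosen freely off specified subsets'' is false, and with a generic choice your second and third checks are performed on a map that is not associated with the triple. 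The gap is repairable: by \cite[Theorem 5.5]{Nav18} there is a projective representation $\mathcal{D}$ of $U$ associated with $(U,L,\vartheta\boldsymbol{\cdot}\psi)$ with $\mathcal{D}_L=\mathcal{R}_L$, and since both $\mathcal{D}(u)$ and $\P(u)\otimes\mathcal{Q}(u)$ induce the conjugation action of $u$ on the irreducible $\mathcal{R}_L$, Schur's lemma yields a unique scalar function $\mu$ with $\P\otimes\mathcal{Q}=\mu\mathcal{D}$; this $\mu$ extends $(\xi\eta)_L$ and satisfies the cocycle identity above, and because $\alpha$ and $\alpha'$ agree on $H\times H$ the restriction $\mu_V$ serves for $\P'\otimes\mathcal{Q}$ as well. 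But this argument must be made; it is not automatic.

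Note also that this repaired construction essentially reproves, in the special case $J=N\boldsymbol{\cdot}C$, the machinery already recorded in Proposition \ref{prop:Bijections given by projective representations for double relations}. The paper's proof simply applies that proposition to $\vartheta\boldsymbol{\cdot}\psi\in\irr(N\boldsymbol{\cdot}C\mid\vartheta)$ and then identifies $\sigma_{N\boldsymbol{\cdot}C}(\vartheta\boldsymbol{\cdot}\psi)=\varphi\boldsymbol{\cdot}\psi$ by \cite[Lemma 5.1]{Isa-Mal-Nav07}; the only genuinely new content of the lemma is this identification of the image character, which in your write-up is the (correct) observation that $\mathcal{R}'(mc)=\P'(m)\otimes\mathcal{Q}(c)$ has trace $\varphi(m)\psi(c)$. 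You would shorten your argument considerably, and avoid the $\mu$ issue entirely, by quoting Proposition \ref{prop:Bijections given by projective representations for double relations} instead of rebuilding it.
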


\begin{proof}
First recall that $C\leq \c_G(N)\leq H$ and observe that $N\boldsymbol{\cdot} C$ and $M\boldsymbol{\cdot} C$ are central products. By the assumption and applying Proposition \ref{prop:Bijections given by projective representations for double relations} with $J:=N\boldsymbol{\cdot} C$ we obtain
$$\left(\n_G(C)_{\vartheta\boldsymbol{\cdot}\psi},N\boldsymbol{\cdot} C,\vartheta\boldsymbol{\cdot}\psi\right)\geq_c\left(\n_H(C)_{(\sigma_{N\boldsymbol{\cdot} C}\vartheta\boldsymbol{\cdot}\psi)},M\boldsymbol{\cdot} C,\sigma_{N\boldsymbol{\cdot} C}(\vartheta\boldsymbol{\cdot} \psi)\right).$$
To conclude, notice that \cite[Lemma 5.1]{Isa-Mal-Nav07} implies that $\sigma_{N\boldsymbol{\cdot} C}(\vartheta\boldsymbol{\cdot} \psi)=\varphi\boldsymbol{\cdot} \psi$.
\end{proof}

We are now ready to prove the main result of this section.

\begin{cor}
\label{cor:Constructing bijections from iMcK}
Let $K\unlhd A$ be finite groups such that $K/\z(K)$ is a direct product of non-abelian simple groups whose universal covering groups satisfy Conjecture \ref{conj:iMcK}. Let $P_0$ be a Sylow $p$-subgroup of $K$. Then there exists a $\n_A(P_0)$-invariant subgroup $\n_K(P_0)\leq M<K$ and a $\n_A(P_0)$-equivariant bijection
$$\Omega:\irr_{p'}(K)\to\irr_{p'}(M)$$
such that
$$\left(A_\vartheta,K,\vartheta\right)\geq_c\left(M\n_A(P_0)_\vartheta,M,\Omega(\vartheta)\right),$$
for every $\vartheta\in\irr_{p'}(K)$.
\end{cor}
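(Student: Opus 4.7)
The approach is to decompose $K$ as an iterated central product reflecting the isotypic structure of $K/\z(K)$, apply Proposition \ref{prop:Constructing bijections from iMcK, perfect case} to each factor, and reassemble the resulting bijections via Lemma \ref{lem:Central products and triple isomorphisms}.

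I would first set $Z := \z(K)$ and write $K/Z = \prod_{i=1}^{r} S_i^{n_i}$, where the $S_i$ are pairwise non-isomorphic non-abelian simple groups whose universal covers satisfy Conjecture \ref{conj:iMcK}. Since every automorphism of $K/Z$ preserves the isomorphism types of its simple direct factors, the preimage $K_i$ of $S_i^{n_i}$ in $K$ is $A$-invariant; setting $L_i := [K_i, K_i]$ then gives a perfect, $A$-invariant subgroup with $L_i/\z(L_i) \simeq S_i^{n_i}$, and $K$ is realised as the iterated central product $L_1 \boldsymbol{\cdot} \cdots \boldsymbol{\cdot} L_r \boldsymbol{\cdot} Z$. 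Writing $P_i := P_0 \cap L_i \in \syl_p(L_i)$ and $Z_p := P_0 \cap Z$, one checks $P_0 = P_1 \cdots P_r Z_p$ and, using the $A$-invariance of each $L_i$ together with the characteristicity of $Z_p$ in the abelian group $Z$, also $\n_A(P_0) = \bigcap_i \n_A(P_i)$.

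Next I would apply Proposition \ref{prop:Constructing bijections from iMcK, perfect case} to each $L_i \unlhd A$ to obtain an $\n_A(P_i)$-invariant subgroup $\n_{L_i}(P_i) \leq M_i < L_i$ together with an $\n_A(P_i)$-equivariant bijection $\Omega_i : \irr_{p'}(L_i) \to \irr_{p'}(M_i)$ yielding $(A_{\vartheta_i}, L_i, \vartheta_i) \geq_c (M_i \n_A(P_i)_{\vartheta_i}, M_i, \Omega_i(\vartheta_i))$ for every $\vartheta_i \in \irr_{p'}(L_i)$. I would then define $M := M_1 \boldsymbol{\cdot} \cdots \boldsymbol{\cdot} M_r \boldsymbol{\cdot} Z$, which is $\n_A(P_0)$-invariant, strictly contained in $K$ since each $M_i < L_i$, and contains $\n_K(P_0) = Z \cdot \prod_i \n_{L_i}(P_i)$. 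Using the uniqueness of the central product decomposition $\vartheta = \vartheta_1 \boldsymbol{\cdot} \cdots \boldsymbol{\cdot} \vartheta_r \boldsymbol{\cdot} \lambda$ of each $\vartheta \in \irr_{p'}(K)$, with $\vartheta_i \in \irr_{p'}(L_i)$ and $\lambda \in \irr(Z)$ linear and compatible with the $\vartheta_i$ on each overlap $\z(L_i)$, together with the fact that the central character on $\z(L_i)$ is preserved under the isomorphism of Proposition \ref{prop:Constructing bijections from iMcK, perfect case}, I would set
\[
\Omega(\vartheta) := \Omega_1(\vartheta_1) \boldsymbol{\cdot} \cdots \boldsymbol{\cdot} \Omega_r(\vartheta_r) \boldsymbol{\cdot} \lambda \in \irr_{p'}(M).
\]
This is an $\n_A(P_0)$-equivariant bijection, and the desired central isomorphism $(A_\vartheta, K, \vartheta) \geq_c (M \n_A(P_0)_\vartheta, M, \Omega(\vartheta))$ would follow by an iterated application of Lemma \ref{lem:Central products and triple isomorphisms} along the central product decomposition of $K$, at each step using that the next factor sits inside the centraliser of everything amalgamated so far.

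The main obstacle is the careful bookkeeping of this iterated application of Lemma \ref{lem:Central products and triple isomorphisms}: one must verify that at each step the right-hand stabiliser evolves correctly, so that the final triple isomorphism has exactly $M \n_A(P_0)_\vartheta$ on the right. This relies on the identifications $A_\vartheta = \bigcap_i A_{\vartheta_i} \cap A_\lambda$ and $\n_A(P_0) = \bigcap_i \n_A(P_i)$, as well as on the fact that each $\n_A(P_i)$ normalises every subgroup $M_j$ with $j\ne i$ (which is automatic since $L_j \leq \c_A(L_i)$ and $M_j \leq L_j$).
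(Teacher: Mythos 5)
Your proposal is essentially the paper's own proof: the same decomposition of $K$ into an iterated central product of $Z=\z(K)$ with the perfect $A$-invariant layers $L_i$, the same invocation of Proposition \ref{prop:Constructing bijections from iMcK, perfect case} on each layer, the same definition of $M$ and $\Omega$ via central products, and the same reassembly of the $\geq_c$-relations by repeated use of Lemma \ref{lem:Central products and triple isomorphisms} (the paper phrases this as an induction on the number of factors, which is what your ``iterated application'' amounts to). The stabiliser bookkeeping you flag as the main obstacle is exactly the point the paper handles with the auxiliary identity $\n_A(P_0)=\n_A(Q_0,\dots,Q_\ell)$ together with Lemma \ref{lem:Basic properties of H-triples isomorphisms}, and your sketch of it is consistent with that argument.
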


\begin{proof}
By hypothesis there exist non-isomorphic non-abelian simple groups $S_1,\dots, S_\ell$ that satisfy the inductive McKay condition and non-negative integers $n_1,\dots, n_\ell$ such that $K/\z(K)\simeq S_1^{n_1}\times \dots \times S_\ell^{n_\ell}$. Consider the subgroups $\z(K)\leq K_{0,i}\leq K$ such that $K_{0,i}/\z(K)\simeq S_i^{n_i}$ and observe that $K_i:=[K_{0,i},K_{0,i}]$ is a perfect normal subgroup of $A$ with $K_i/\z(K_i)\simeq S_i^{n_i}$, for $i=1,\dots,\ell$. If $K_0:=\z(K)$, then $K=K_0\boldsymbol{\cdot} \dotso\boldsymbol{\cdot} K_\ell$ is a central product of the subgroups $K_i$ and $Z:=\cap_{i=0}^\ell K_i$ satisfies $Z=\z([K,K])=\z(K_i)$, for all $i=1,\dots,\ell$. 

Let $\vartheta\in\irr_{p'}(K)$ and consider the unique irreducible constituent $\nu\in\irr(Z)$ of $\vartheta_Z$. By \cite[Lemma 5.1]{Isa-Mal-Nav07} there exist unique characters $\vartheta_i\in\irr_{p'}(K_i\mid \nu)$ such that $\vartheta=\vartheta_0\boldsymbol{\cdot} \dotso\boldsymbol{\cdot}\vartheta_\ell$. Set $Q_i:=P_0\cap K_i\in\syl_p(K_i)$ and $A_i:=\n_A(Q_i)$. By Proposition \ref{prop:Constructing bijections from iMcK, perfect case}, for every $i=1,\dots, \ell$, there exists an $A_i$-invariant subgroup $\n_{K_i}(Q_i)\leq M_i< K_i$ and a $A_i$-equivariant bijection
$$\Omega_i:\irr_{p'}(K_i)\to\irr_{p'}(M_i)$$
such that
$$\left(A_{\vartheta_i},K_i,\vartheta_i\right)\geq_c\left(M_iA_{i,\vartheta_i},M_i,\Omega_i(\vartheta_i)\right),$$
for every $\vartheta_i\in\irr_{p'}(K_i)$. For $i=0$, set $M_0:=K_0$ and let $\Omega_0$ be the identity map on $\irr(K_0)$. Now, the subgroup $M:=M_0\boldsymbol{\cdot} \dotso\boldsymbol{\cdot}M_\ell$ is the central product of the $M_i$'s and has the required properties. Moreover, the map
\begin{align*}
\Omega:\irr_{p'}(K)&\to\irr_{p'}(M)
\\
\vartheta_0\boldsymbol{\cdot} \dotso\boldsymbol{\cdot} \vartheta_\ell &\mapsto \Omega_0(\vartheta_0)\boldsymbol{\cdot} \dotso\boldsymbol{\cdot} \Omega_\ell(\vartheta_\ell)
\end{align*}
is a well defined $\n_A(P_0)$-equivariant bijection. It remains to check the statement on character triples. To do so, we are going to prove that
\begin{multline}
\label{eq:Constructing bijections from iMcK 1}
\left(A_{\vartheta_0\boldsymbol{\cdot} \dotso\boldsymbol{\cdot}\vartheta_\ell},K_0\boldsymbol{\cdot} \dotso\boldsymbol{\cdot}K_\ell,\vartheta_0\boldsymbol{\cdot} \dotso\boldsymbol{\cdot}\vartheta_\ell\right)\geq_c
\\
\left((M_0\boldsymbol{\cdot} \dotso\boldsymbol{\cdot}M_\ell) \n_A(Q_0,\dots, Q_\ell)_{\vartheta_0\boldsymbol{\cdot} \dotso\boldsymbol{\cdot}\vartheta_\ell},M_0\boldsymbol{\cdot} \dotso\boldsymbol{\cdot}M_\ell,\Omega_0(\vartheta_0)\boldsymbol{\cdot} \dotso\boldsymbol{\cdot}\Omega_\ell(\vartheta_\ell)\right)
\end{multline}
by induction on $\ell\geq 1$. Let $\ell=1$. By the previous section we know that
$$\left(A_{\vartheta_1},K_1,\vartheta_1\right)\geq_c\left(M_1A_{1,\vartheta_1},M_1,\Omega_1(\vartheta_1)\right)$$
and applying Lemma \ref{lem:Central products and triple isomorphisms} with $C:=K_0$ we deduce
$$\left(A_{\vartheta_0\boldsymbol{\cdot}\vartheta_1},K_0\boldsymbol{\cdot} K_1,\vartheta_0\boldsymbol{\cdot} \vartheta_1\right)\geq_c\left(M_1A_{1,\vartheta_0\boldsymbol{\cdot}\vartheta_1},K_0\boldsymbol{\cdot}M_1,\vartheta_0\boldsymbol{\cdot}\Omega_1(\vartheta_1)\right),$$
here we used the fact that $A_{\vartheta_0\boldsymbol{\cdot}\vartheta_1}\leq A_{\vartheta_0}\cap A_{\vartheta_1}$. Because $K_0=M_0$, $\Omega_0(\vartheta_0)=\vartheta_0$ and $M_1A_{1,\vartheta_0\boldsymbol{\cdot}\vartheta_1}=(M_0\boldsymbol{\cdot}M_1)\n_A(Q_0,Q_1)_{\vartheta_0\boldsymbol{\cdot} \vartheta_1}$ it follows that \eqref{eq:Constructing bijections from iMcK 1} holds for $\ell=1$. Consider now $\ell>1$. The inductive hypothesis yields
\begin{multline*}
\left(A_{\vartheta_0\boldsymbol{\cdot} \dotso\boldsymbol{\cdot}\vartheta_{\ell-1}},K_0\boldsymbol{\cdot} \dotso\boldsymbol{\cdot}K_{\ell-1},\vartheta_0\boldsymbol{\cdot} \dotso\boldsymbol{\cdot}\vartheta_{\ell-1}\right)\geq_c
\\
\left((M_0\boldsymbol{\cdot} \dotso\boldsymbol{\cdot}M_{\ell-1}) \n_A(Q_0,\dots, Q_{\ell-1})_{\vartheta_0\boldsymbol{\cdot} \dotso\boldsymbol{\cdot}\vartheta_{\ell-1}},M_0\boldsymbol{\cdot} \dotso\boldsymbol{\cdot}M_{\ell-1},\Omega_0(\vartheta_0)\boldsymbol{\cdot} \dotso\boldsymbol{\cdot}\Omega_{\ell-1}(\vartheta_{\ell-1})\right).
\end{multline*}
Noticing that $M_\ell\leq K_\ell\leq \n_A(Q_0,\dots,Q_{\ell-1})_{\vartheta_0\boldsymbol{\cdot} \dotso\boldsymbol{\cdot}\vartheta_\ell}$ and applying Lemma \ref{lem:Central products and triple isomorphisms} we deduce
\begin{multline}
\label{eq:Constructing bijections from iMcK 2}
\left(A_{\vartheta_0\boldsymbol{\cdot} \dotso\boldsymbol{\cdot}\vartheta_{\ell}},K_0\boldsymbol{\cdot} \dotso\boldsymbol{\cdot}K_{\ell},\vartheta_0\boldsymbol{\cdot} \dotso\boldsymbol{\cdot}\vartheta_{\ell}\right)\geq_c
\\
\left((M_0\boldsymbol{\cdot} \dotso\boldsymbol{\cdot}M_{\ell}) \n_A(Q_0,\dots, Q_{\ell-1})_{\vartheta_0\boldsymbol{\cdot} \dotso\boldsymbol{\cdot}\vartheta_{\ell}}, M_0\boldsymbol{\cdot} \dotso\boldsymbol{\cdot}M_{\ell-1}\boldsymbol{\cdot}K_\ell,
\right.\\ \left.
\Omega_0(\vartheta_0)\boldsymbol{\cdot} \dotso\boldsymbol{\cdot}\Omega_{\ell-1}(\vartheta_{\ell-1})\boldsymbol{\cdot}\vartheta_\ell\right).
\end{multline}
On the other hand the fact that
$$\left(A_{\vartheta_\ell},K_\ell,\vartheta_\ell\right)\geq_c\left(M_\ell A_{\ell,\vartheta_\ell},M_\ell,\Omega_\ell(\vartheta_\ell)\right)$$
together with Lemma \ref{lem:Basic properties of H-triples isomorphisms} (ii) implies
\begin{multline*}
\left((M_0\boldsymbol{\cdot} \dotso\boldsymbol{\cdot}M_{\ell}) \n_A(Q_0,\dots, Q_{\ell-1})_{\vartheta_0\boldsymbol{\cdot} \dotso\boldsymbol{\cdot}\vartheta_{\ell}},K_{\ell},\vartheta_{\ell}\right)\geq_c
\\
\left((M_0\boldsymbol{\cdot} \dotso\boldsymbol{\cdot}M_{\ell}) \n_A(Q_0,\dots, Q_{\ell})_{\vartheta_0\boldsymbol{\cdot} \dotso\boldsymbol{\cdot}\vartheta_{\ell}}, M_\ell,\Omega_{\ell}(\vartheta_{\ell})\right).
\end{multline*}
We now apply Lemma \ref{lem:Central products and triple isomorphisms} with $C:=M_0\boldsymbol{\cdot} \dotso\boldsymbol{\cdot}M_{\ell-1}$ and $\psi=\Omega_0(\vartheta_0)\boldsymbol{\cdot} \dotso\boldsymbol{\cdot}\Omega_{\ell-1}(\vartheta_{\ell-1})$ to obtain
\begin{multline}
\label{eq:Constructing bijections from iMcK 3}
\left((M_0\boldsymbol{\cdot} \dotso\boldsymbol{\cdot}M_{\ell}) \n_A(Q_0,\dots, Q_{\ell-1})_{\vartheta_0\boldsymbol{\cdot} \dotso\boldsymbol{\cdot}\vartheta_{\ell}},M_0\boldsymbol{\cdot} \dotso\boldsymbol{\cdot}M_{\ell-1}\boldsymbol{\cdot}K_{\ell},
\right.
\\
\left.\Omega_0(\vartheta_0)\boldsymbol{\cdot} \dotso\boldsymbol{\cdot}\Omega_{\ell-1}(\vartheta_{\ell-1})\boldsymbol{\cdot}\vartheta_{\ell}\right)\geq_c
\\
\left((M_0\boldsymbol{\cdot} \dotso\boldsymbol{\cdot}M_{\ell}) \n_A(Q_0,\dots, Q_{\ell})_{\vartheta_0\boldsymbol{\cdot} \dotso\boldsymbol{\cdot}\vartheta_{\ell}}, M_0\boldsymbol{\cdot} \dotso\boldsymbol{\cdot}M_{\ell},\Omega_0(\vartheta_0)\boldsymbol{\cdot} \dotso\boldsymbol{\cdot}\Omega_{\ell}(\vartheta_{\ell})\right).
\end{multline}
Now \eqref{eq:Constructing bijections from iMcK 1} follows from \eqref{eq:Constructing bijections from iMcK 2} and \eqref{eq:Constructing bijections from iMcK 3}.
\end{proof}

\section{The reduction}

In this final section we prove Theorem \ref{thm:Main}. To do so, proceeding as in \cite[Section 7]{Nav-Spa14I}, we analyse the structure of a minimal counterexample to Theorem \ref{thm:Main}.

\begin{lem}
\label{lem:Reduction 1}
Let $G\unlhd A$ be a minimal counterexample to Theorem \ref{thm:Main} with respect to $|G:\z(G)|$. Let $K\unlhd A$, $K\leq G$ such that $|G:K|<|G:\z(G)|$ and consider an $A$-invariant $\zeta\in\irr_{p'}(K)$. Then there exists a $\n_A(P)$-equivariant bijection
$$\Upsilon_\zeta:\irr_{p'}\left(G\enspace\middle|\enspace\zeta\right)\to \irr_{p'}\left(K\n_G(P)\enspace\middle|\enspace \zeta\right)$$
such that
$$\left(A_\tau,G,\tau\right)\geq_c\left(K\n_A(P)_\tau,K\n_G(P),\Upsilon_\zeta(\tau)\right),$$
for every $\tau\in\irr_{p'}(G\mid \zeta)$.
\end{lem}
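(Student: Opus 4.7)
Plan. The strategy is to descend, via a central isomorphism of character triples built from $(A,K,\zeta)$, to a pair $(\hat G,\hat A)$ for which $|\hat G:\z(\hat G)|$ is strictly smaller than $|G:\z(G)|$, apply the minimality of $G$ to conclude Conjecture~\ref{conj:iMcK} for that pair, and then transfer the resulting bijection back to $(G,A)$.

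Since $\zeta\in\irr_{p'}(K)$ is $A$-invariant, a standard central-replacement construction (of the type underlying Proposition~\ref{prop:Bijections given by projective representations for double relations}) produces a triple $(\hat A,Z,\hat\zeta)$ centrally isomorphic to $(A,K,\zeta)$, where $Z$ is cyclic and central in $\hat A$, the character $\hat\zeta$ is linear and faithful on $Z$, and there is a canonical bijection $H\leftrightarrow \hat H$ between subgroups $K\leq H\leq A$ and $Z\leq \hat H\leq \hat A$ with $\hat H/Z\simeq H/K$. Writing $\hat G$ for the image of $G$ and choosing $\hat P\in\syl_p(\hat G)$ to be compatible with $P$, one checks that under this correspondence $K\n_G(P)\leftrightarrow \n_{\hat G}(\hat P)$ and $\n_A(P)\leftrightarrow \n_{\hat A}(\hat P)$. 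Because $Z\leq \z(\hat G)$ and $|\hat G:Z|=|G:K|$,
$$|\hat G:\z(\hat G)|\;\leq\;|\hat G:Z|\;=\;|G:K|\;<\;|G:\z(G)|,$$
so by the minimality of $G$ as a counterexample to Theorem~\ref{thm:Main}, Conjecture~\ref{conj:iMcK} holds for $\hat G\unlhd \hat A$ at the prime $p$.

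Applying Conjecture~\ref{conj:iMcK} with the flexibility of taking $\hat M=\n_{\hat G}(\hat P)$ (as noted after the conjecture), I obtain an $\n_{\hat A}(\hat P)$-equivariant bijection
$$\hat\Omega:\irr_{p'}(\hat G)\longrightarrow \irr_{p'}\bigl(\n_{\hat G}(\hat P)\bigr)$$
such that $(\hat A_{\hat\chi},\hat G,\hat\chi)\geq_c (\n_{\hat G}(\hat P)\n_{\hat A}(\hat P)_{\hat\chi},\n_{\hat G}(\hat P),\hat\Omega(\hat\chi))$ for every $\hat\chi\in\irr_{p'}(\hat G)$. Because $\hat\zeta$ is a faithful linear character of the central subgroup $Z$, the subset $\irr_{p'}(\hat G\mid\hat\zeta)$ is exactly the fiber over $\hat\zeta$ of the central-character map on $Z$; the $\geq_c$-compatibility of $\hat\Omega$ forces this fiber to be preserved, so $\hat\Omega$ restricts to an $\n_{\hat A}(\hat P)$-equivariant bijection $\irr_{p'}(\hat G\mid\hat\zeta)\to\irr_{p'}(\n_{\hat G}(\hat P)\mid\hat\zeta)$.

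To finish, I would translate $\hat\Omega$ back through the central isomorphism of triples. Under the isomorphism, $\irr_{p'}(G\mid\zeta)$ is in canonical $\n_A(P)$-equivariant correspondence with $\irr_{p'}(\hat G\mid\hat\zeta)$, and similarly $\irr_{p'}(K\n_G(P)\mid\zeta)\leftrightarrow\irr_{p'}(\n_{\hat G}(\hat P)\mid\hat\zeta)$. Composing these canonical correspondences with $\hat\Omega$ yields the required bijection $\Upsilon_\zeta$, and Proposition~\ref{prop:Bijections given by projective representations for double relations} (for passage through the central isomorphism) together with Proposition~\ref{prop:Triples and multiplication} (for the multiplicative interaction with the central character $\hat\zeta$) ensures that the $\geq_c$-relation survives at every stage, producing
$$(A_\tau,G,\tau)\geq_c(K\n_A(P)_\tau,K\n_G(P),\Upsilon_\zeta(\tau))$$
for all $\tau\in\irr_{p'}(G\mid\zeta)$. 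The main obstacle is not the presence of a new idea but the careful bookkeeping of this chain of identifications — verifying that all subgroup correspondences ($K\n_G(P)\leftrightarrow\n_{\hat G}(\hat P)$, $\n_A(P)\leftrightarrow\n_{\hat A}(\hat P)$) match precisely, and that equivariance and $\geq_c$ travel through the chain uniformly in $\tau$.
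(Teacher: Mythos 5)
Your argument reconstructs essentially the route the paper takes (its proof simply points to \cite[Lemma 7.3]{Nav-Spa14I}): replace $(A,K,\zeta)$ by a centrally isomorphic triple whose normal subgroup is cyclic and central --- the paper cites \cite[Lemma 2.17]{Spa18} for this step rather than Proposition~\ref{prop:Bijections given by projective representations for double relations} --- apply minimality to the new group $\hat G$, whose index over its centre is at most $|G:K|<|G:\z(G)|$, and transfer the bijection back using Proposition~\ref{prop:Triples and multiplication}. The bookkeeping you defer (the correspondences $K\n_G(P)\leftrightarrow\n_{\hat G}(\hat P)$ and $\n_A(P)\leftrightarrow\n_{\hat A}(\hat P)$, preservation of the fibre over $\hat\zeta$, and the iteration needed to take $\hat M=\n_{\hat G}(\hat P)$, which uses $\z(\hat G)\leq\z(\hat M)$) is exactly what the cited proof carries out, so the proposal is correct and essentially identical to the paper's.
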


\begin{proof}
We proceed as in the proof of \cite[Lemma 7.3]{Nav-Spa14I} and we apply \cite[Lemma 2.17]{Spa18} and Proposition \ref{prop:Triples and multiplication} in place respectively of \cite[Theorem 4.5]{Nav-Spa14I} and \cite[Theorem 4.6]{Nav-Spa14I}.
\end{proof}

\begin{prop}
\label{prop:Reduction 1}
Let $G\unlhd A$ be a minimal counterexample to Theorem \ref{thm:Main} with respect to $|G:\z(G)|$. Let $K\unlhd A$, $K\leq G$ such that $|G:K|<|G:\z(G)|$. Then there exists a $\n_A(P)$-equivariant bijection
$$\Upsilon_K:\irr_{p'}(G)\to \irr_{p'}(K\n_G(P))$$
such that
$$\left(A_\tau,G,\tau\right)\geq_c\left(K\n_A(P)_\tau,K\n_G(P),\Upsilon_K(\tau)\right),$$
for every $\tau\in\irr_{p'}(G)$.
\end{prop}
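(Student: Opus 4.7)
Set $M:=K\n_G(P)$ for brevity. The strategy is to assemble $\Upsilon_K$ orbit-by-orbit by partitioning $\irr_{p'}(G)$ and $\irr_{p'}(M)$ according to the $A$-orbit of $K$-constituents lying below. Since $K\unlhd A$ sits inside both $G$ and $M$, we obtain well-defined decompositions $\irr_{p'}(G)=\bigsqcup_{\Omega}\irr_{p'}(G\mid\Omega)$ and $\irr_{p'}(M)=\bigsqcup_{\Omega}\irr_{p'}(M\mid\Omega)$ indexed by the $A$-orbits $\Omega\subseteq \irr_{p'}(K)$, so it suffices to produce an $\n_A(P)$-equivariant bijection with the required triple compatibility on each such orbit.

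When $\Omega=\{\zeta\}$ is a singleton (that is, $\zeta$ is $A$-invariant), Lemma \ref{lem:Reduction 1} directly supplies the required $\Upsilon_\zeta$ on $\irr_{p'}(G\mid\zeta)$ and we set $\Upsilon_K:=\Upsilon_\zeta$ there. For a general orbit, pick $\zeta\in\Omega$ and set $T:=A_\zeta\lneq A$, $G_\zeta:=G\cap T$, and $M_\zeta:=M\cap T=K(\n_G(P))_\zeta$ (using that $K\leq T$ since $K$ acts trivially on its own characters). The Clifford correspondences
\[
\ind_{G_\zeta}^G\colon\irr_{p'}(G_\zeta\mid\zeta)\to\irr_{p'}(G\mid\zeta)\qquad\text{and}\qquad\ind_{M_\zeta}^M\colon\irr_{p'}(M_\zeta\mid\zeta)\to\irr_{p'}(M\mid\zeta)
\]
reduce the task to constructing an $\n_T(P_\zeta)$-equivariant bijection $\Upsilon_\zeta^T\colon\irr_{p'}(G_\zeta\mid\zeta)\to\irr_{p'}(M_\zeta\mid\zeta)$, for $P_\zeta:=P\cap G_\zeta\in\syl_p(G_\zeta)$, that carries the analogous inertia-level triple relation. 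Once $\Upsilon_\zeta^T$ is in hand, Proposition \ref{prop:Constructing bijections over bijections} and Lemma \ref{lem:Irreducible induction} transport it up to an $\n_A(P)$-equivariant bijection $\irr_{p'}(G\mid\Omega)\to\irr_{p'}(M\mid\Omega)$ satisfying the required $(A_\tau,G,\tau)\geq_c(M\n_A(P)_\tau,M,\Upsilon_K(\tau))$.

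To produce $\Upsilon_\zeta^T$ I would invoke Lemma \ref{lem:Reduction 1} applied to $G_\zeta\unlhd T$, in which $\zeta$ is now $T$-invariant. Since $\z(G)\leq\z(G_\zeta)$, we have $|G_\zeta:\z(G_\zeta)|\leq|G:\z(G)|$, with strict inequality exactly when $G_\zeta\lneq G$, i.e.\ when $\zeta$ is not $G$-invariant; in that case the minimality of $G\unlhd A$ as a counterexample yields Conjecture \ref{conj:iMcK} (and its Lemma \ref{lem:Reduction 1}-form) for $G_\zeta\unlhd T$. Assembling the orbit-wise bijections for a compatible choice of transversals gives the global $\Upsilon_K$, with the central triple compatibility propagating via Proposition \ref{prop:Bijections given by projective representations for double relations}.

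The main obstacle is the boundary case where $\zeta$ is $G$-invariant but not $A$-invariant. Here $G_\zeta=G$ renders the Clifford reduction in $G$ trivial and $|G_\zeta:\z(G_\zeta)|=|G:\z(G)|$ blocks a direct appeal to minimality in $|G:\z(G)|$. One must therefore treat this situation by exploiting the proper inclusion $T\lneq A$ at the ambient level: a further application of Proposition \ref{prop:Constructing bijections over bijections} descends the $\n_A(P)$-action on $\Omega$ to the $\n_T(P)$-action under which $\zeta$ becomes invariant, so that the already-constructed bijection supplied by Lemma \ref{lem:Reduction 1} for $G\unlhd T$ can be promoted back up to $\n_A(P)$-equivariance on the full $A$-orbit. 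This is the technically delicate core of the argument.
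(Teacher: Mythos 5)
Your overall strategy --- decompose by orbits of constituents in $\irr_{p'}(K)$, pass to the stabiliser $T=A_\zeta$ via the Clifford correspondence, and transport $\geq_c$ back up with Lemma \ref{lem:Irreducible induction} --- is the route of the proof the paper cites (\cite[Proposition 7.4]{Nav-Spa14I}). However, two steps do not go through as written. The main one is the case $G_\zeta<G$: you claim that minimality of $G$ ``yields Conjecture \ref{conj:iMcK} (and its Lemma \ref{lem:Reduction 1}-form) for $G_\zeta\unlhd T$''. Minimality only gives the \emph{absolute} statement for $G_\zeta$, namely a bijection $\irr_{p'}(G_\zeta)\to\irr_{p'}(M')$ onto some subgroup $M'$ containing the normaliser of a Sylow $p$-subgroup, with no control over $K$ or over the fibre above $\zeta$; what you need is the \emph{relative} bijection $\irr_{p'}(G_\zeta\mid\zeta)\to\irr_{p'}(K\n_{G_\zeta}(P_\zeta)\mid\zeta)$, which is not a formal consequence of the absolute one. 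Nor can Lemma \ref{lem:Reduction 1} be invoked as a black box for $G_\zeta\unlhd T$, since its hypothesis is precisely that the pair is a minimal counterexample, which $G_\zeta\unlhd T$ is not. The correct justification is that the proof of Lemma \ref{lem:Reduction 1} (i.e.\ of \cite[Lemma 7.3]{Nav-Spa14I}) only uses the inequality $|G:K|<|G:\z(G)|$ together with the fact that every group whose quotient by the centre has order smaller than $|G:\z(G)|$ satisfies the conjecture; since $|G_\zeta:K|\le|G:K|<|G:\z(G)|$, that proof runs verbatim for $G_\zeta\unlhd T$ with the now $T$-invariant character $\zeta$. Supplying this observation is the actual content of the step, and it is missing.

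Secondly, you never arrange for the representatives $\zeta$ to be $P$-invariant. For an arbitrary $\zeta$ in the orbit, $P\cap G_\zeta$ need not be a Sylow $p$-subgroup of $G_\zeta$, so your $P_\zeta$ is ill-chosen and the local side cannot be matched. One needs that every $\tau\in\irr_{p'}(G)$ and every character in $\irr_{p'}(K\n_G(P))$ lies over a $P$-invariant constituent of $\irr(K)$ (true because the number of $K$-constituents of such a character is prime to $p$), and one must take the transversal inside the $P$-invariant characters so that $P\le G_\zeta$. Finally, the ``boundary case'' you single out as the delicate core is in fact the easy one: when $\zeta$ is $G$-invariant, $A_\tau\le A_\zeta=T$ for every $\tau\in\irr(G\mid\zeta)$, so the triple relation over $T$ produced by Lemma \ref{lem:Reduction 1} applied with ambient group $T$ already is the required relation over $A$, and $\n_A(P)$-equivariance across the orbit follows from the usual transversal extension; Proposition \ref{prop:Constructing bijections over bijections} is not the relevant tool there.
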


\begin{proof}
This follows from the proof of \cite[Proposition 7.4]{Nav-Spa14I} by replacing \cite[Theorem 3.14]{Nav-Spa14I} and \cite[Lemma 7.3]{Nav-Spa14I} respectively with Lemma \ref{lem:Irreducible induction} and Lemma \ref{lem:Reduction 1}.
\end{proof}

As an immediate consequence we obtain that, for a minimal counterexample $G$, we have $G=K\n_G(P)$ for any $K\unlhd A$ with $K\leq G$ and $|G:K|<|G:\z(G)|$.

\begin{cor}
\label{cor:Reduction 1}
Let $G\unlhd A$ be a minimal counterexample to Theorem \ref{thm:Main} with respect to $|G:\z(G)|$. Let $K\unlhd A$, $K\leq G$ such that $|G:K|<|G:\z(G)|$. Then $G=K\n_G(P)$.
\end{cor}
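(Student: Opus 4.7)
The plan is to derive a contradiction directly from Proposition \ref{prop:Reduction 1}: if $K\n_G(P)$ were a proper subgroup of $G$, it would play the role of the subgroup $M$ in Conjecture \ref{conj:iMcK} for $G$ and thereby contradict the assumption that $G$ is a minimal counterexample.

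First, I would dispose of the trivial case when $P$ is normal in $G$. In this case $\n_G(P)=G$, so $K\n_G(P)=G$ and there is nothing to prove. Hence I may assume $P$ is not normal in $G$, and argue by contradiction by supposing that $M:=K\n_G(P)<G$.

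Next, I would verify that $M$ is a legitimate candidate for the subgroup appearing in Conjecture \ref{conj:iMcK}: it contains $\n_G(P)$ by construction, is properly contained in $G$ by assumption, and is $\n_A(P)$-stable because $K\unlhd A$ and $\n_G(P)$ is stable under $\n_A(P)$. I would then take the bijection $\Upsilon_K$ provided by Proposition \ref{prop:Reduction 1}; it is $\n_A(P)$-equivariant and satisfies
$$\left(A_\tau, G, \tau\right)\geq_c\left(K\n_A(P)_\tau, K\n_G(P), \Upsilon_K(\tau)\right)$$
for every $\tau\in\irr_{p'}(G)$. To match this with Conjecture \ref{conj:iMcK}, I need the identity $M\n_A(P)_\tau=K\n_A(P)_\tau$. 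This follows because every $\tau\in\irr(G)$ is automatically $G$-invariant, so $G\leq A_\tau$ and hence $\n_G(P)=\n_A(P)\cap G\leq \n_A(P)_\tau$, which gives $M\n_A(P)_\tau=K\n_G(P)\n_A(P)_\tau=K\n_A(P)_\tau$.

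Putting these ingredients together, the pair $(M,\Upsilon_K)$ verifies Conjecture \ref{conj:iMcK} for $G$ at the prime $p$, contradicting the assumption that $G$ is a counterexample. I do not expect any serious obstacle; the entire argument is a short bookkeeping verification once Proposition \ref{prop:Reduction 1} is in hand. The only mildly subtle point is the identification $M\n_A(P)_\tau=K\n_A(P)_\tau$, which relies on the automatic $G$-invariance of $\tau$ and on the fact that $K$ is normal in $A$.
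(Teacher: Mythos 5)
Your proof is correct, and it is actually more economical than the one in the paper. You verify Conjecture \ref{conj:iMcK} for the pair $G\unlhd A$ by taking $M:=K\n_G(P)$ itself as the intermediate subgroup, using the bijection $\Upsilon_K$ from Proposition \ref{prop:Reduction 1} together with the (correct) identification $M\n_A(P)_\tau=K\n_A(P)_\tau$, which holds because $\n_G(P)\leq \n_A(P)\cap A_\tau=\n_A(P)_\tau$. The paper instead invokes the minimality of $G$ a second time: it applies Theorem \ref{thm:Main} to $G_1:=K\n_G(P)\unlhd K\n_A(P)$ (after checking $|G_1:\z(G_1)|<|G:\z(G)|$ and that the simple groups involved in $G_1$ are involved in $G$) to get a bijection $\irr_{p'}(G_1)\to\irr_{p'}(\n_{G_1}(P))=\irr_{p'}(\n_G(P))$, and then composes this with $\Upsilon_K$, thereby verifying the conjecture for $G$ with $M=\n_G(P)$. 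Your shortcut works precisely because Conjecture \ref{conj:iMcK} allows any $\n_A(P)$-stable proper subgroup $M$ with $\n_G(P)\leq M\leq G$ rather than forcing $M=\n_G(P)$; the paper's longer route is what one would need if the target had to be the normaliser itself. Both arguments have Proposition \ref{prop:Reduction 1} as the sole substantive input, and your handling of the degenerate case $P\unlhd G$ and of the $\n_A(P)$-stability of $K\n_G(P)$ is complete.
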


\begin{proof}
Suppose that $G_1:=K\n_G(P)$ is a proper subgroup of $G$. Then every non-abelian simple group involved in $G$ is also involved in $G_1$ and $|G_1:\z(G_1)|<|G:\z(G)|$. Set $A_1:=K\n_A(P)$. By the minimality of $G$ we can find a $\n_{A_1}(P)$-equivariant bijection
$$\Omega_1:\irr_{p'}(G_1)\to\irr_{p'}(\n_{G_1}(P))$$
such that
$$\left(A_{1,\vartheta},G_1,\vartheta\right)\geq_c\left(\n_{A_1}(P)_\vartheta,\n_{G_1}(P),\Omega_1(\vartheta)\right),$$
for every $\vartheta\in\irr_{p'}(G_1)$. Notice that $\n_{G_1}(P)=\n_G(P)$ and that $\n_{A_1}(P)=\n_A(P)$. Then, applying Proposition \ref{prop:Reduction 1} and composing the obtained bijection with $\Omega_1$ we obtain a contradiction. This proves that $G=K\n_G(P)$.
\end{proof}

Next we want to show that, if $G$ is a minimal counterexample and $K\unlhd A$ with $K\leq G$ such that $K$ has a Sylow $p$-subgroup which is central in $G$, then $K\leq \z(G)$. To do so we use the following result.

\begin{theo}
\label{thm:DGN and triples}
Let $A$ be a finite group and $M,K\unlhd A$ such that $K\leq M$ and $M/K$ is a $p$-group. Let $P$ be a $p$-subgroup of $M$ such that $M=KP$ and $P_0:=P\cap K\leq \z(M)$. Then there exists a $\n_A(P)$-equivariant bijection 
$$\Lambda_P:\irr_{p'}(M)\to\irr_{p'}(\n_M(P))$$
such that
$$\left(A_\vartheta,M,\vartheta\right)\geq_c\left(\n_A(P)_\vartheta,\n_M(P),\Lambda_P(\vartheta)\right),$$
for every $\vartheta\in\irr_{p'}(M)$.
\end{theo}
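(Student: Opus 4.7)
The statement is a triple-isomorphism enhancement of the Dade--Glauberman--Nakayama correspondence, specialised to the case when the $p'$-part of the action is absorbed into the centre. I would proceed by induction on $[M:K]$, which equals $[P:P_0]$ since $M=KP$.

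The base case $[M:K]=1$ is immediate: $M=K$ forces $P=P_0\leq\z(M)$, whence $\n_M(P)=M$, and $\Lambda_P:=\mathrm{id}$ witnesses every assertion trivially.

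For the inductive step, choose a subgroup $L$ with $K\leq L\unlhd M$ and $[M:L]=p$, and set $P_1:=P\cap L$, so that $L=KP_1$, $[P:P_1]=p$, and $P_1\cap K=P_0\leq \z(L)$. Working inside the ambient group $\n_A(L)\cap\n_A(P_1)$, the inductive hypothesis yields a bijection $\Lambda_{P_1}\colon\irr_{p'}(L)\to\irr_{p'}(\n_L(P_1))$ together with the required relation $\geq_c$. Given $\vartheta\in\irr_{p'}(M)$, Clifford theory over the cyclic quotient $M/L$ of order $p$ produces two cases: either $\vartheta_L=\psi\in\irr_{p'}(L)$ and $\vartheta$ is determined by $\psi$ up to twist by a linear character of $M/L$ (Gallagher), or $\vartheta=\psi^M$ is induced from some $\psi\in\irr_{p'}(L)$. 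In the first case, I would extend the projective representations from the inductive step to level $M$ using \cite[Theorem 5.5]{Nav18} and then apply Proposition \ref{prop:Bijections given by projective representations for double relations} to recover the desired triple isomorphism at $(\n_M(P),\n_L(P_1))$; in the second case, I would apply Lemma \ref{lem:Irreducible induction} directly to transport the triple isomorphism through induction. Combining the two cases defines $\Lambda_P$ on all of $\irr_{p'}(M)$.

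The principal difficulty lies in the coherent construction of the bijection across the induction: the choices at each level (the intermediate $L$, the linear twists in the extended case, the extensions of the projective representations) must be made so that the final $\Lambda_P$ is genuinely $\n_A(P)$-equivariant rather than only equivariant for a smaller subgroup. The hypothesis $P_0\leq\z(M)$ is what makes the compatibility with central characters automatic, since every $\vartheta\in\irr_{p'}(M)$ restricts to $P_0$ as $\vartheta(1)\lambda$ for a unique linear $\lambda\in\irr(P_0)$, and $\lambda$ is preserved by both the inductive bijection and its extension. The $\n_A(P)$-equivariance then reduces, by the inductive step and Proposition \ref{prop:Bijections given by projective representations for double relations}, to the observation that $\n_A(P)\leq \n_A(L)\cap\n_A(P_1)$, so the inductive data transports correctly under the action of $\n_A(P)$.
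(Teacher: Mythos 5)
The paper's proof is a one-line citation to \cite[Corollary 5.14]{Nav-Spa14I}, which packages a character-triple version of the Dade--Glauberman--Nakayama correspondence; your self-contained induction on $[M:K]$ is therefore a genuinely different route, but it has two real gaps. First, the equivariance argument rests on the claim $\n_A(P)\leq\n_A(L)\cap\n_A(P_1)$, and this is false in general: $\n_A(P)$ need not normalise $L$. When $M/K\cong P/P_0$ is elementary abelian of rank at least $2$, the group $\n_A(P)$ can permute the maximal subgroups of $M/K$ transitively, so no $\n_A(P)$-invariant choice of $L$ need exist (already for $K=1$, $M=P\cong C_p\times C_p$, $A=M\rtimes\GL_2(\mathbb{F}_p)$). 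Without an $\n_A(P)$-stable $L$ the inductive data is not transported by $\n_A(P)$ and the resulting $\Lambda_P$ is not $\n_A(P)$-equivariant.

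Second, and more structurally, the inductive hypothesis delivers a bijection onto $\irr_{p'}(\n_L(P_1))$, whereas the local group required at level $M$ is $\n_M(P)$, with $\n_M(P)\cap L=\n_L(P)$, and in general $\n_L(P)$ is a \emph{proper} subgroup of $\n_L(P_1)$: an element of $L$ normalising $P_1=P\cap L$ need not normalise $P$. Concretely, take $M=S_3\times C_2$, $K=C_3\times 1$, $L=K\times C_2$, $P\in\syl_2(M)$; then $\n_L(P_1)=L\cong C_6$ while $\n_L(P)=P\cap L\cong C_2$. Your recovery step via \cite[Theorem 5.5]{Nav18} and Proposition~\ref{prop:Bijections given by projective representations for double relations} therefore lands on a triple over $\n_L(P_1)$, not over $\n_L(P)$. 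The missing descent from $\n_L(P_1)$ to $\n_L(P)$ is precisely the Glauberman-type content that \cite[Corollary 5.14]{Nav-Spa14I} encapsulates, and it cannot be supplied by another round of the same induction: the needed hypothesis ``$P_1\leq\z(\n_M(P_1))$'' is not available, so the assumption $P_0\leq\z(M)$ does not propagate. A minor additional remark: for $\vartheta\in\irr_{p'}(M)$ the induced case of your Clifford dichotomy over $M/L$ never occurs, since $\vartheta=\psi^M$ would force $p\mid[M:L]\mid\vartheta(1)$, so only the extension case is relevant; this is not an error, but the dichotomy is unnecessary.
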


\begin{proof}
This follows directly from \cite[Corollary 5.14]{Nav-Spa14I}. 
\end{proof}

\begin{prop}
\label{prop:Reduction 2}
Let $G\unlhd A$ be a minimal counterexample to Theorem \ref{thm:Main} with respect to $|G:\z(G)|$. Let $K\unlhd A$, $K\leq G$ and suppose that $P_0:=P\cap K\leq\z(G)$. Then $K\leq \z(G)$.
\end{prop}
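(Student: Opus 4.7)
The plan is to derive a contradiction by showing that, for such a minimal counterexample $G$, the bijection and triple relation required by Conjecture \ref{conj:iMcK} can be set up with $M := \n_G(P)$. Assume for contradiction that $K \not\leq \z(G)$ and set $K' := K\z(G)$. Then $K'$ strictly contains $\z(G)$, so $|G:K'| < |G:\z(G)|$, and Corollary \ref{cor:Reduction 1} applied to $K'$ yields $G = K'\n_G(P) = K\n_G(P)$, since $\z(G) \leq \n_G(P)$. A minimal counterexample cannot have $P$ normal in $G$ (otherwise the conjecture holds trivially with $M = G$), hence $\n_G(P) < G$.

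Set $M := KP$. This is normal in $G$: every $g \in G$ has the form $g = kn$ with $k \in K$ and $n \in \n_G(P)$, so $gPg^{-1} = kPk^{-1} \subseteq KP$. Moreover $M/K \simeq P/P_0$ is a $p$-group and $P_0 = P \cap K \leq \z(G) \leq \z(M)$. To apply Theorem \ref{thm:DGN and triples}, work inside the ambient group $A' := K\n_A(P)$: both $K$ and $M$ are normal in $A'$, and the identity $\n_{A'}(P) = \n_A(P)$ holds, since any $kn \in A'$ normalising $P$ forces $k \in \n_K(P) \leq \n_A(P)$. The theorem then supplies an $\n_A(P)$-equivariant bijection
\[
\Lambda_P : \irr_{p'}(M) \to \irr_{p'}(\n_M(P))
\]
with $(A'_\vartheta, M, \vartheta) \geq_c (\n_A(P)_\vartheta, \n_M(P), \Lambda_P(\vartheta))$ for every $\vartheta$.

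Plug this into Proposition \ref{prop:Constructing bijections over bijections} with $A$ replaced by $A'$, $K$ by $M$, $A_0$ by $\n_A(P)$, $J$ by $G$ and $Q := P$. Since $A' = M\n_A(P)$, $\n_{A'}(P) = \n_A(P)$, and both $G/M$ and $\n_G(P)/\n_M(P)$ have $p'$-order (because $M$ contains a Sylow $p$-subgroup of $G$), all hypotheses are met and one obtains an $\n_A(P)$-equivariant bijection
\[
\Omega : \irr_{p'}(G) \to \irr_{p'}(\n_G(P))
\]
with $(A'_\chi, G, \chi) \geq_c (\n_A(P)_\chi, \n_G(P), \Omega(\chi))$ for every $\chi \in \irr_{p'}(G)$. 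A Frattini argument gives $A = G\n_A(P)$, and since every $\chi \in \irr(G)$ is fixed by $G$ we have $\n_G(P) \leq \n_A(P)_\chi$; hence
\[
A_\chi = G\n_A(P)_\chi = K\n_G(P)\n_A(P)_\chi = K\n_A(P)_\chi = A'_\chi.
\]
This delivers Conjecture \ref{conj:iMcK} for $G$ with $M := \n_G(P) < G$, contradicting the choice of $G$.

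The main delicacy is the simultaneous management of two different ambient groups: $A'$ must be large enough that both $K$ and $M = KP$ are normal in it, so that Theorem \ref{thm:DGN and triples} can be applied; yet the final identification $A_\chi = A'_\chi$ must go through so that the relation produced by Proposition \ref{prop:Constructing bijections over bijections} is stated with the full normaliser $A_\chi$ that Conjecture \ref{conj:iMcK} demands.
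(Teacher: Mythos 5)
Your proposal is correct and follows essentially the same route as the paper: invoke Corollary \ref{cor:Reduction 1} to get $G=K\n_G(P)$, form $M:=KP$ with $P\cap K\leq\z(M)$, apply Theorem \ref{thm:DGN and triples}, and then lift via Proposition \ref{prop:Constructing bijections over bijections} to contradict minimality. The only cosmetic difference is that you introduce $A':=K\n_A(P)$ and later check $A'_\chi=A_\chi$, whereas the paper observes up front via the Frattini argument that $A=K\n_A(P)$, so $A'=A$ and no reconciliation of stabilisers is needed.
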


\begin{proof}
For the sake of contradiction assume $K\nleq \z(G)$. Then $|G:K\z(G)|<|G:\z(G)|$ and Corollary \ref{cor:Reduction 1} implies $G=K\z(G)\n_G(P)=K\n_G(P)$. Recall that $A=G\n_A(P)$ by the Frattini argument. Then $A=K\n_A(P)$ and the subgroup $M:=KP$ is normal in $A$ and satisfies $P_0:=K\cap P\leq \z(M)$ by hypothesis. Now, Theorem \ref{thm:DGN and triples} yields a $\n_A(P)$-equivariant bijection
$$\Lambda_P:\irr_{p'}(M)\to\irr_{p'}(\n_M(P))$$
such that
$$\left(A_\vartheta,M,\vartheta\right)\geq_c\left(\n_A(P)_\vartheta,\n_M(P),\Lambda_P(\vartheta)\right),$$
for every $\vartheta\in\irr_{p'}(M)$. Finally, after noticing that $\irr_{p'}(G)\subseteq \irr(G\mid \irr_{p',P}(M))$ and that $\irr_{p'}(\n_G(P))\subseteq \irr(\n_G(P)\mid \irr_{p',P}(\n_M(P)))$, we obtain a contradiction by applying Proposition \ref{prop:Constructing bijections over bijections} .
\end{proof}

We are finally ready to prove Theorem \ref{thm:Main}.

\begin{proof}[Proof of Theorem \ref{thm:Main}] Suppose, for the sake of a contradiction, that the result fails to hold and consider a counterexample $G\unlhd A$ minimal with respect to $|G:\z(G)|$. By Corollary \ref{cor:Reduction 1} (applied with $K:=\z(G)\O_p(G)$) it follows that $\O_p(G)\leq \z(G)$. Furthermore, as $\O_{p'}(G)\z(G)\cap D\leq \z(G)$, Proposition \ref{prop:Reduction 2} yields $\O_{p'}(G)\leq \z(G)$. As a consequence $\z(G)=\F(G)<\F^*(G)$, where $\F^*(G)=\F(G)\E(G)$ is the generalized Fitting subgroup of $G$ which is the product of the Fitting subgroup $\F(G)$ and the layer $K:=\E(G)$. Set $P_0:=P\cap K$ and observe that, replacing $P$ with a $G$-conjugate, we may assume that $P_0$ is a Sylow $p$-subgroup of $K$. Since $K\nleq \z(G)$, Proposition \ref{prop:Reduction 2} shows that $P_0:=P\cap K\nleq \z(G)$ and, as $\z(K)\leq \F(G)=\z(G)$, we obtain $P_0\nleq \z(K)$. Now, we can apply Corollary \ref{cor:Constructing bijections from iMcK} and find an $\n_A(P_0)$-invariant subgroup $\n_K(P_0)\leq M<K$ and a $\n_A(P_0)$-equivariant bijection
$$\Omega:\irr_{p'}(K)\to\irr_{p'}(M)$$
such that
$$\left(A_\vartheta,K,\vartheta\right)\geq_c\left(\n_A(P_0)_\vartheta,M,\Omega(\vartheta)\right),$$
for every $\vartheta\in\irr_{p'}(K)$.
Next, observe that $\n_A(P)\leq \n_A(P_0)$, that $G=K\n_G(P_0)$ by Corollary \ref{cor:Reduction 1} and that $\irr_{p'}(G)\subseteq \irr(G\mid \irr_{p',P}(K))$ and $\irr_{p'}(M\n_G(P_0))\subseteq\irr(M\n_G(P_0)\mid \irr_{p',P}(M))$. Applying Proposition \ref{prop:Constructing bijections over bijections} we obtain a $\n_A(P)$-equivariant bijection
$$\Pi_P:\irr_{p'}(G)\to\irr_{p'}(M\n_G(P_0))$$
such that
$$\left(A_\tau,G,\tau\right)\geq_c\left(M\n_A(P_0)_\vartheta,M\n_G(P_0),\Pi_P(\tau)\right)_{\H},$$
for every $\tau\in\irr_{p'}(G)$. Finally, by the minimality of $G$, it follows that Theorem \ref{thm:Main} holds for $M\n_G(P_0)$ (recall that $M<K$). This fact gives us a bijection $\irr_{p'}(M\n_G(P_0))\to \irr_{p'}(\n_G(P))$ which composed with $\Pi_P$ leads to the final contradiction.
\end{proof}

\bibliographystyle{alpha}
\bibliography{References}

\vspace{1cm}

DEPARTMENT OF MATHEMATICS, CITY, UNIVERSITY OF LONDON, EC$1$V $0$HB, UNITED KINGDOM.
\textit{Email address:} \href{mailto:damiano.rossi@city.ac.uk}{damiano.rossi@city.ac.uk}

\end{document}